\title{dp-minimal valued fields}
\author{Franziska Jahnke} 
\address{Westf\"alische Wilhelms-Universit\"at M\"unster\\Institut f\"ur Mathematische Logik und Grundlagenforschung\\Einsteinstr. 62\\48149 M\"unster, 
Germany}
\email{franziska.jahnke@uni-muenster.de}
\author{Pierre Simon}
\address{Universit\'e Claude Bernard - Lyon 1\\
Institut Camille Jordan\\
43 boulevard du 11 novembre 1918\\
69622 Villeurbanne Cedex, France}
\email{simon@math.univ-lyon1.fr}
\thanks{Partially supported by ValCoMo (ANR-13-BS01-0006).}
\author{Erik Walsberg}
\address{Department of Mathematics\\
University of California, Los Angeles\\
Box 951555\\
Los Angeles, CA 90095-1555,
USA}
\email{erikw@math.ucla.edu}
\newtheorem{Th}{Theorem}[section]
\newtheorem{Thm}[Th]{Theorem}
\newtheorem*{Def}{Definition}
\newtheorem{Cor}[Th]{Corollary}
\newtheorem{Prop}[Th]{Proposition}
\newtheorem{Lem}[Th]{Lemma}
\newtheorem*{Lem*}{Lemma}
\begin{document}
\begin{abstract}We show that dp-minimal valued fields are henselian and that a dp-minimal field admitting a definable type V topology is either real closed, algebraically closed or admits a non-trivial definable henselian valuation.
We give classifications of 
dp-minimal ordered abelian groups and dp-minimal ordered fields without additional structure.
\end{abstract}

\maketitle

\section{Introduction}

Very little is known about NIP fields. 
It is widely believed that an NIP field is either real closed, separably closed or admits a definable henselian valuation. Note that even the stable case of this conjecture is open.

In this paper we study a very special case of this question: that of a valued or ordered dp-minimal field. Dp-minimality is a combinatorial generalization of o-minimality and C-minimality. A dp-minimal structure can be thought of as a \emph{one-dimensional} NIP structure. The main result of this paper is that a dp-minimal valued field is henselian. As a consequence, we show that an $\aleph_1$-saturated dp-minimal ordered field admits a henselian valuation with residue field $\mathbb{R}$. These results can be seen either as a special case of the conjecture on NIP fields or as a generalization of what is known in the C-minimal and weakly o-minimal cases.

Our proof has two parts.
We first establish some facts about dp-minimal topological structures.
In Section~\ref{section:uniform} we generalize statements and proofs of Goodrick \cite{Goodrick} and Simon \cite{Simon:dp-min} on ordered dp-minimal structures to the more general setting of a dp-minimal structure admitting a definable \emph{uniform topology}. This seems to be the most general framework to which the proofs apply. Only afterwords will we assume that the topology comes either from a valuation or an order.
Once we have established the necessary facts about dp-minimal uniform structures the remainder of the proof, given in Section~\ref{section:dpminfields}, follows part of the proof that weakly o-minimal fields are real closed.
The proof that weakly o-minimal fields are real closed is surprisingly more complicated then the proof that o-minimal fields are real closed.
It involves first finding some henselian valuation and then showing that the residue field is real closed and the value group is divisible, from which the result follows. This argument was extended by Guingona \cite{G14} to dp-small structures, a strengthening of dp-minimality. We follow again the same path, but assuming only dp-minimality, the value group is not necessarily divisible.

In Section~\ref{section:dpmingroup} we use the Gurevich-Schmitt quantifier elimination for ordered abelian groups to show that an ordered abelian group $\Gamma$ without additional structure is dp-minimal if and only if it is is \emph{non-singular}, that is if $|\Gamma/p\Gamma| < \infty$ for all primes $p$.
It follows that an $\aleph_1$-saturated dp-minimal ordered field admits a henselian valuation with residue field $\mathbb{R}$ and non-singular value group.
 In fact, this is best possible: Chernikov and Simon \cite{CS:inp-min} show that any such field is dp-minimal.

In Section~\ref{section:typeV} we show that a dp-minimal valued field is either real closed, algebraically closed or admits a henselian valuation which is definable in the language of rings.
This follows easily from results in valuation theory and the theory of definable valuations on henselian fields developed in \cite{JK14a} and \cite{JK14}.
We show that a dp-minimal field admitting a definable type V topology has an externally definable valuation, it follows that a dp-minimal field admitting a definable type V topology is henselian and thus admits a definable henselian valuation.
In the final part of the section, for any dp-minimal ordered field which is not real closed we give an explicit fomula in the language $\mathcal{L}_\textrm{ring}\cup\{\leq\}$ defining a non-trivial henselian valuation.
In Proposition~\ref{prop1} we show that an ordered field which is not dense in its real closure admits a definable convex valuation.
This result may be of independent interest.

After finishing this paper, we learnt from Will Johnson that he simultaneously proved more general results on dp-minimal fields using
different methods. In fact, he gives a complete classification. Most of our results can be deduced from his work. 

\section{dp-minimality}
Let $L$ be a multisorted language with a distinguished home sort and let $T$ be a theory in $L$.
Throughout this section $\mathcal M$ is an $|L|^+$-saturated model of $T$ with distinguished home sort $M$.
We recall the definitions of inp- and dp-minimality.
These definitions are usually stated for one-sorted structures, we define them in our multisorted setting.

\begin{Def}
The theory $T$ is \textbf{not inp-minimal} with respect to the home sort if there are two formulas $\phi(x;\bar y)$ and $\psi(x;\bar z)$, where $x$ is a single variable of sort $M$, two sequences $(\bar b_i:i<\omega)$ and $(\bar c_i:i<\omega)$ in $\mathcal M$ and $k \in \mathbb{N}$ such that:
\begin{itemize}
\item the sets $\{\phi(x;\bar b_i):i<\omega\}$ and $\{\psi(x;\bar c_i):i<\omega\}$ are each $k$-inconsistent;
\item $\phi(x;\bar b_i)\wedge \psi(x;\bar c_j)$ is consistent for all $i,j < \omega$.
\end{itemize}
\end{Def}
If $T$ is inp-minimal and one-sorted then $T$ is NTP$_2$.
\begin{Def}
The theory $T$ is \textbf{not dp-minimal} with respect to the home sort if there are two formulas $\phi(x;\bar y)$ and $\psi(x;\bar z)$, where $x$ is a single variable of sort $M$, two sequences $(\bar b_i:i<\omega)$ and $(\bar c_i:i<\omega)$ in $\mathcal M$ such that:
\begin{itemize}
\item for any $i_*,j_* \in \omega$, the conjunction $$\phi(x;\bar b_{i_*})\wedge \left[\bigwedge_{i\neq i_*} \neg \phi(x;\bar b_i)\right] \wedge \psi(x;\bar c_{j_*})\wedge \left[\bigwedge_{j\neq j_*} \neg \psi(x;\bar c_j)\right]$$
is consistent.
\end{itemize}
\end{Def}

If $T$ is one-sorted then $T$ is dp-minimal if and only if $T$ is both inp-minimal and NIP (see for example \cite{Simon:dp-min}).
It follows that if $T$ is $NIP$ and inp-minimal with respect to the home sort then $T$ is dp-minimal with respect to the home sort.

We define the \textbf{Shelah expansion} $\mathcal M^{Sh}$ of $\mathcal M$.
Fix an $|\mathcal M|^+$-saturated elementary expansion $\mathcal M\prec \mathcal N$.
We say that a subset of $M$ is \textbf{externally definable} if it is of the form $\{ a \in M : \mathcal N \models \phi(a;b)\}$ for some $L$-formula $\phi(x;y)$ and $b \in \mathcal N^{|y|}$.
A straightforward application of $|\mathcal M|^+$-saturation shows that the collection of externally definable sets does not depend on the choice of $\mathcal N$.
Note that the externally definable sets form a boolean algebra.
The Shelah expansion $\mathcal M^{Sh}$ is the expansion of $\mathcal M$ by predicates for all externally definable subsets of $M$.
More precisely, for each formula $\phi(x;y)\in L$ and each $b\in \mathcal N^{|y|}$, we have a relation $R_{\phi(x;b)}(x)$ which is interpreted as $\{a \in M : \mathcal N\models \phi(a;b)\}$.
 Shelah proved the following, see e.g. \cite[Chapter 3]{Simon:book}.

\begin{Prop}[Shelah]
If $\mathcal M$ is NIP then $\mathcal M^{Sh}$ admits elimination of quantifiers and is consequently also NIP.
\end{Prop}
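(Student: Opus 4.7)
The plan is to prove that $\mathcal M^{Sh}$ admits quantifier elimination; the NIP conclusion then follows easily. Indeed, once quantifier elimination is in hand, every $L^{Sh}$-formula $\theta(x;y)$ is a Boolean combination of atomic predicates $R_{\chi(u;c)}$, each of which, when unfolded, is the $L$-formula $\chi(u;c)$ evaluated in $\mathcal N$. Since $L$ is NIP and NIP is preserved under naming parameters and under Boolean combinations, $\mathcal M^{Sh}$ inherits NIP.

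Quantifier elimination amounts, by induction on formula complexity, to verifying that the class of externally definable subsets of $M^n$, uniformly in $n$, is closed under the operations arising from the syntactic operations of $L^{Sh}$. Closure under finite Boolean combinations is immediate from the construction of $\mathcal M^{Sh}$. The only nontrivial step is closure under existential quantification over the home sort: given an externally definable $X=\{\bar c\in M^{n+1}:\mathcal N\models\phi(\bar c;b)\}$ with $\phi\in L$ and $b\in\mathcal N$, one must show that the projection $\pi X=\{\bar a\in M^n:\exists t\in M,\ \mathcal N\models\phi(\bar a,t;b)\}$ is externally definable.

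The key tool here is the honest definition lemma for NIP theories. Passing to a further $|M|^+$-saturated elementary extension $\mathcal N'\succ\mathcal N$, NIP of the formula $\phi$ delivers an $L$-formula $\theta(x,y;w)$ and a parameter $d\in\mathcal N'$ such that $\theta$ and $\phi$ define the same trace on $M^{n+1}$ while the solution set of $\theta(\cdot,\cdot;d)$ in $\mathcal N'$ is constrained relative to that of $\phi(\cdot,\cdot;b)$. From such a $\theta$ one then reads off, after careful unpacking, an $L$-formula with parameter in $\mathcal N'$ defining $\pi X$ on $M^n$, exhibiting $\pi X$ as externally definable.

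The principal obstacle is the honest definition lemma itself, which is where NIP enters in an essential way: its proof combines a Sauer--Shelah style uniform bound on the shatter function of the family $\{\phi(x;b):b\in\mathcal N\}$ with a compactness/ultralimit argument amalgamating the resulting finitary honest definitions into a single parameter $d$ that works simultaneously across $M^{n+1}$. All other steps --- the inductive setup, the routine verification of Boolean closure, and the deduction of NIP of $\mathcal M^{Sh}$ from quantifier elimination --- are essentially bookkeeping.
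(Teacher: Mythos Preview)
The paper does not give its own proof of this proposition; it simply attributes the result to Shelah and refers the reader to Chapter~3 of Simon's book. Your outline via honest definitions is precisely the argument presented in that reference, so your approach coincides with the one the paper is (implicitly) invoking, and the sketch is correct in shape.

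One small remark on the projection step, since you leave it deliberately vague: the phrase ``constrained relative to'' should be read as $\theta(\cdot;d)\vdash\phi(\cdot;b)$, and the passage from $\theta$ to an external definition of $\pi X$ is cleanest via the finitary form of honest definitions --- for each finite $A_0\subseteq M^n$ one chooses witnesses $t_a\in M$ for $a\in A_0\cap\pi X$, applies the honest definition to the resulting finite subset of $M^{n+1}$ to get $c\in M$ with $A_0\cap\pi X\subseteq(\exists t\,\theta)(M^n;c)\subseteq\pi X$, and then compactness in a saturated extension yields a single parameter that works uniformly. You correctly flag this as the crux, so the plan is sound.
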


We make extensive use of the next fact which was observed in \cite{OnUs1}.

\begin{Prop}\label{prop_shelah}
If $T$ is $NIP$ and dp-minimal with respect to the home sort then $\mathrm{Th}(\mathcal M^{Sh})$ 
is also dp-minimal (relative to the same home sort).
\end{Prop}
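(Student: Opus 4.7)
The plan is to argue by contrapositive using the positive formulation of dp-minimality from the definition above: a would-be witness of non-dp-minimality in $\mathcal{M}^{Sh}$ will translate directly into one for $T$ in $\mathcal{N}$. The reason to work with the dp-minimality formulation rather than the equivalent inp-minimality formulation (available since $\mathcal{M}^{Sh}$ is NIP by Shelah's theorem) is that the former asks for the consistency of an infinite conjunction, which transfers trivially between $\mathcal{M}^{Sh}$ and $\mathcal{N}$: an element of $M$ realizing a Shelah-language conjunction is automatically an element of $N$ realizing the translated $L$-conjunction, with no Ramsey extraction or honest-definitions argument needed.

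So suppose $\mathrm{Th}(\mathcal{M}^{Sh})$ is not dp-minimal. By Shelah's theorem $\mathcal{M}^{Sh}$ admits quantifier elimination, so after enlarging $\mathcal{M}$ we may assume the witnessing Shelah-language formulas $\phi(x;\bar y),\psi(x;\bar z)$ (with $x$ of the home sort) are quantifier-free, with sequences $(\bar b_i)_{i<\omega},(\bar c_j)_{j<\omega}$ in $M$ and, for every $i_*,j_*$, an element $a_{i_*j_*}\in M$ realizing the corresponding conjunction. The key observation is that each Shelah predicate $R_{\chi(u;d')}$ appearing in $\phi$ or $\psi$ carries its external parameter $d'\in\mathcal{N}$ inside the symbol, so only finitely many such external tuples occur and none depends on the index. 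Collecting them into fixed tuples $d,e\in\mathcal{N}$ produces $L$-formulas $\chi_\phi(x;\bar y,v)$ and $\chi_\psi(x;\bar z,w)$ with
$$\mathcal{M}^{Sh}\models\phi(a;\bar b)\iff\mathcal{N}\models\chi_\phi(a;\bar b,d), \qquad \mathcal{M}^{Sh}\models\psi(a;\bar c)\iff\mathcal{N}\models\chi_\psi(a;\bar c,e)$$
for all $a\in M$ and all $\bar b,\bar c$ in $M$ of the appropriate sorts.

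Now consider in $\mathcal{N}$ the $L$-formulas $\chi_\phi,\chi_\psi$ together with the sequences $(\bar b_i,d)_{i<\omega}$ and $(\bar c_j,e)_{j<\omega}$. For each $i_*,j_*$ the element $a_{i_*j_*}\in M\subseteq N$ satisfies
$$\chi_\phi(x;\bar b_{i_*},d)\wedge\bigwedge_{i\neq i_*}\neg\chi_\phi(x;\bar b_i,d)\wedge\chi_\psi(x;\bar c_{j_*},e)\wedge\bigwedge_{j\neq j_*}\neg\chi_\psi(x;\bar c_j,e)$$
in $\mathcal{N}$, and $\mathcal{N}\models T$ is sufficiently saturated, so this exhibits a failure of dp-minimality for $T$ with respect to the home sort, contradicting the hypothesis. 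The only nontrivial step in this plan is the appeal to Shelah's quantifier elimination that lets us absorb the external parameters of $\phi$ and $\psi$ into single fixed tuples $d,e$; beyond this, the transfer is automatic.
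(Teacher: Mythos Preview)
Your argument is essentially the paper's: both use Shelah's quantifier elimination to replace the witnessing $\mathcal{M}^{Sh}$-formulas by $L$-formulas with fixed external parameters $d,e\in\mathcal{N}$, and then read off a failure of dp-minimality for $T$ in $\mathcal{N}$. The one wrinkle is your phrase ``after enlarging $\mathcal{M}$'': since $\mathcal{M}^{Sh}$ (and a priori its theory) depends on $\mathcal{M}$, enlarging $\mathcal{M}$ changes the statement you are proving; the paper avoids this by taking, for each $n<\omega$, finite sequences in $M$ witnessing the $n$-truncated failure, translating those to $\mathcal{N}$ with the fixed $d,e$, and then invoking compactness in $\mathcal{N}$---which is the clean way to execute exactly your idea.
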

\begin{proof}
Since $T$ is dp-minimal, it is NIP, hence the structure $\mathcal M^{Sh}$ 
has elimination of quantifiers. We suppose towards a contradiction that 
$\mathrm{Th}(\mathcal M^{SH})$ is not dp-minimal. Then there are two formulas 
$\phi(x;\bar y)$, $\psi(x;\bar z)$ such that for any $n<\omega$, there are 
finite sequences $(a_i:i<n)$ and $(b_i:i<n)$ of elements of $M$ with the 
property as in the definition of dp-minimality. 
As $\mathcal M^{SH}$ admits quantifier elimination there are tuples 
$d,d'\in \mathcal N$ and $L$-formulas $\phi'(x;\bar y;d),\psi'(x;\bar z;d')$ 
such that for any $(a,b)\in \mathcal M^{|x|}\times \mathcal M^{|y|}$, we have 
$\mathcal M^{Sh}\models \phi(a;b) \iff \mathcal N\models \phi'(a;b;d)$ 
likewise for $\psi$ and $\psi'$. 
Using compactness, we see that the formulas $\phi'(x;\bar y;d)$ and $\psi'(x;\bar z;d')$ contradict the dp-minimality of $T$.
\end{proof}

If $M$ admits a definable linear order then every convex subset of $M$ is externally definable, so the expansion of an ordered dp-minimal structure by a convex set is always dp-minimal.
We end the section with a proof of the well-known fact that dp-minimal fields are perfect.
The proof shows that a field of finite dp-rank is perfect.

\begin{Lem}\label{perfect}
Every dp-minimal field is perfect.
\end{Lem}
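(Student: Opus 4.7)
The plan is to argue by contradiction: suppose $K$ is dp-minimal but imperfect. Then $\mathrm{char}(K) = p > 0$ and some $a \in K$ does not lie in the subfield $K^p$. I will write down two formulas and two $\omega$-sequences of parameters that directly witness the failure of dp-minimality as formulated in the paper. Since any finite field is perfect, we may assume $K$ is infinite.

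The formulas I use are
\[
\phi(x;y) := \exists z\,(x - y = z^p), \qquad \psi(x;y) := \exists z\,(x - y = a z^p),
\]
which define the additive cosets $y + K^p$ and $y + aK^p$ respectively. The algebraic core of the argument is that $K^p \cap aK^p = \{0\}$: any identity $a c^p = d^p$ with $c \neq 0$ gives $a = (d/c)^p \in K^p$, contradicting the choice of $a$.

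For parameters, fix pairwise distinct sequences $(w_i)_{i<\omega}$ and $(t_j)_{j<\omega}$ in $K$ and set $b_i := a w_i^p$ and $c_j := t_j^p$. Then $b_i - b_{i'} = a(w_i - w_{i'})^p$ lies in $aK^p \setminus \{0\}$, so by the intersection calculation the cosets $b_i + K^p$ are pairwise disjoint; symmetrically, $c_j - c_{j'} = (t_j - t_{j'})^p \in K^p \setminus \{0\}$ implies the cosets $c_j + aK^p$ are pairwise disjoint. For any $i_*, j_* < \omega$ the element
\[
x := a w_{i_*}^p + t_{j_*}^p
\]
satisfies $\phi(x;b_{i_*})$ (witness $z = t_{j_*}$) and $\psi(x;c_{j_*})$ (witness $z = w_{i_*}$); by disjointness of the two families of cosets it also satisfies $\neg\phi(x;b_i)$ for all $i \neq i_*$ and $\neg\psi(x;c_j)$ for all $j \neq j_*$. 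This is exactly the configuration forbidden by the definition of dp-minimality, so the assumption that $K$ is imperfect is untenable.

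The only nontrivial ingredient is the intersection computation $K^p \cap aK^p = \{0\}$; everything else is routine unwinding. The same construction, carried out with a $K^p$-linearly independent tuple of length $n$ in place of $\{1, a\}$, produces analogous configurations of depth $n$, which is the source of the parenthetical claim that fields of finite dp-rank are perfect.
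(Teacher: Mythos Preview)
Your proof is correct and is essentially the paper's argument unpacked: the paper observes that $\tau(x,y)=x^p+ay^p$ is a definable injection $K^2\to K$ (using exactly your computation $K^p\cap aK^p=\{0\}$) and cites the general fact that such an injection forces dp-rank $\geq 2$, whereas you make this last step explicit by writing down the witnessing pattern with $x=\tau(t_{j_*},w_{i_*})$. The algebraic content is identical; your version is simply more self-contained.
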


\begin{proof}
Let $K$ be a nonperfect field of characteristic $p>0$.
We let $K^p$ be the set of $p$th powers.
Fix $z \in K \setminus K^p$.
Let $\tau : K \times K \rightarrow K$ be given by $\tau(x,y) = x^p + zy^p$.
We show that $\tau$ is injective, it follows that $K$ is not dp-minimal.
Fix $x_0,x_1,y_0,y_1 \in K$ and suppose that $\tau(x_0,y_0) = \tau(x_1,y_1)$ that is:
$$ x_0^p + zy_0^p = x_1^p + zy_1^p.$$
If $y_0 \neq y_1$ then $y_0^p \neq y_1^p$, so:
$$ z = \frac{ x_0^p - x_1^p }{ y_0^p - y_1^p } \in K^p, $$
so we must have $y_0 = y_1$.
Then $ x_0^p = x_1^p$ and so also $x_0 = x_1$.
Thus $\tau$ is injective.
\end{proof}
\section{Dp-minimal uniform spaces}\label{section:uniform}
Many dp-minimal structures of interest admit natural ``definable topologies''.
Typically the topology is given by a definable linear order or a definable valuation.
In this section we develop a framework designed to encompass many of these examples, dp-minimal structures equipped with definable uniform structures.
We prove some results of Goodrick \cite{Goodrick} and Simon \cite{Simon:dp-min} in this setting.
The proofs are essentially the same.
We first recall the notion of a uniform structure on $M$.
We let $\Delta$ be the diagonal $\{ (x,x) : x \in M \}$.
Given $U,V \subseteq M^2$ we let 
$$U \circ V = \{ (x,z) \in M^2 : (\exists y \in M) (x,y) \in U, (y,z) \in V\}.$$
A \textbf{basis} for a uniform structure on $M$ is a collection $\mathcal B$ of subsets of $M^2$ satisfying the following:
\begin{enumerate}
\item the intersection of the elements of $\mathcal B$ is equal to $\Delta$;
\item if $U \in \mathcal B$ and $(x,y) \in U$ then $(y,x) \in U$;
\item for all $U,V \in \mathcal B$ there is a $W\in \mathcal B$ such that $W\subseteq U\cap V$;
\item for all $U\in \mathcal B$ there is a $V\in \mathcal B$ such that $V\circ V \subseteq U$.
\end{enumerate}
The \textbf{uniform structure} on $M$ generated by $\mathcal B$ is defined as 
$$\tilde{\mathcal B} = \{U \subseteq M^2 : 
(\exists V\in \mathcal B)\,V\subseteq U\}.$$
 Elements of $\tilde{\mathcal B}$ are called \textbf{entourages} and elements of $\mathcal B$ are called \textbf{basic entourages}.
Given $U \in \mathcal B$ and $x \in M$ we let $U[x] = \{y : (x,y)\in U\}$.
 As usual, one defines a topology on $M$ by declaring that a subset $A\subseteq M$ is open if for every $x\in A$ there is $U\in  \mathcal B$ such that $U[x] \subseteq A$.
 The first condition above ensures that this topology is Hausdorff.
 The collection $\{ U[x] : U \in \mathcal B\}$ forms a basis of neighborhoods of $x$. 
We will refer to them as \textbf{balls} with center $x$.
Abusing terminology, we say that $\mathcal B$ is a \textbf{definable uniform structure} if there is a formula $\varphi(x,y,\bar{z})$ such that
$$ \mathcal  B = \{ \varphi(M^2,\bar{c})\;| \; \bar{c} \in D\}$$
for some definable set $D$.
Note that the conditions above are first order conditions on $\varphi$.
We give some examples of definable uniform structures.
\begin{enumerate}
\item Suppose that $\Gamma$ is an $\mathcal M$-definable ordered abelian group and $d$ is a definable $\Gamma$-valued metric on $M$.
We  can take $\mathcal B$ to be the collection of sets of the form $\{ (x,y) \in X^2 : d(x,y) < t \}$ for $t \in \Gamma$.
The typical case is when $\Gamma = M$ and $d(x,y) = | x - y|$.
\item Suppose that $\Gamma$ is a definable linear order with minimal element and that $d$ is a definable $\Gamma$-valued ultrametric on $M$. Then we can put a definable uniform structure on $M$ in the same way as above.
The typical case is when $M$ is a valued field.
\item Suppose that $M$ expands a group. Let $D$ be a definable set and suppose that $\{ U_{\bar{z}}: \bar{z} \in D\}$ is a definable family of subsets of $M$ which forms a neighborhood basis at the identity for the topology on $M$ under which $M$ is a topological group. Then sets of the form $\{ (x,y) \in M^2 : x^{-1}y \in U_{\bar{z}}\}$ for $\bar{z} \in D$ give a definable uniform structure on $M$.
\end{enumerate}
\textit{For the remainder of this section we assume that $\mathcal B$ is a definable uniform structure on $M$ and that $\mathcal M$ is inp-minimal with respect to the home sort. We further suppose that $M$ does not have any isolated points}.

\begin{Lem}\label{lem_dense}
Every infinite definable subset of $M$ is dense in some ball.
\end{Lem}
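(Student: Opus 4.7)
The plan is to argue by contradiction. Suppose $X\subseteq M$ is an infinite definable set that is dense in no basic ball. From $X$ I will manufacture an inp-pattern of depth~$2$ on the home sort, contradicting the inp-minimality of $T$.

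First I would locate an \emph{accumulation point} of $X$: a point $a\in M$ for which $X\cap V[a]$ is infinite for every basic entourage $V\in\mathcal B$. Such an $a$ exists by $|L|^{+}$-saturation, since the partial type asserting ``$y\in X\cap V[a]$ and $y\neq a$'' as $V$ ranges over $\mathcal B$ is finitely satisfiable in $\mathcal M$: any finite intersection of basic entourages contains a basic entourage by axiom~(3), and an infinite definable set in a Hausdorff space without isolated points must accumulate somewhere. (Note that ``$V$ is a basic entourage'' is captured by the formula $\varphi(x,y,\bar z)$ parametrising $\mathcal B$, so this is a genuine partial type in $\mathcal M$.)

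Next I would exploit the two complementary sources of geometric data around $a$: the accumulation of $X$ at $a$, which supplies $X$-points inside every $V_n[a]$ along a descending chain of basic entourages $V_0\supseteq V_1\supseteq\cdots$ with $V_{n+1}\circ V_{n+1}\subseteq V_n$, and the nowhere-dense hypothesis, which inside each $V_n[a]$ produces a sub-ball $W_n[c_n]\subseteq V_n[a]$ with $W_n[c_n]\cap X=\emptyset$. Feeding these two kinds of sub-balls into formulas $\phi(x;\bar y)$ and $\psi(x;\bar z)$, each of the form ``$x$ lies in a specified ball and bears a specified relation to $X$'', and choosing parameter sequences $(\bar b_i)_{i<\omega}$, $(\bar c_j)_{j<\omega}$ along the chain $(V_n)$ by an induction on $n$, I expect to obtain rows that are each $k$-inconsistent while every cross-pair $\phi(x;\bar b_i)\wedge\psi(x;\bar c_j)$ remains consistent, witnessing an inp-pattern.

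The main obstacle I expect is the precise design of $\phi$ and $\psi$ and the synchronised choice of the two parameter sequences. A naive attempt using only ball-membership cannot succeed, since two families of pairwise disjoint balls in a one-dimensional uniform space cannot have all cross-intersections nonempty. The formulas must therefore mix ball-membership with the conditions ``$x\in X$'' and ``$x\notin X$'' in a coordinated manner, and the induction must align the ``$X$-meeting'' balls chosen for one row with the ``$X$-avoiding'' balls chosen for the other, using the triangle-inequality axiom $V\circ V\subseteq U$ at each scale to keep the two rows pairwise separated while their cross-pairs still meet. This is the uniform-structure analogue of the alternating-interval argument of Goodrick~\cite{Goodrick} and Simon~\cite{Simon:dp-min} in the linearly ordered dp-minimal setting, and it is the combinatorial heart of the proof.
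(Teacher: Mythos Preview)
Your overall strategy---contradiction via an inp-pattern of depth~$2$---matches the paper's, but the execution diverges at the first step and leaves the core construction unspecified.

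The accumulation-point step is not justified as written. The partial type you describe has parameters ranging over all of $D$, which is not a small set, so $|L|^{+}$-saturation does not apply. Worse, if the type is read literally as requiring a single $y$ to lie in every $V[a]$, axiom~(1) forces $y=a$, so the type is inconsistent. What you actually need is an $a$ such that \emph{for each} $V$ there is \emph{some} $y_V\in X\cap V[a]\setminus\{a\}$; this is a $\forall\exists$-condition on $a$, definable since $D$ is, but there is no a priori reason it should be satisfiable. An infinite definable set in a uniform space could in principle be discrete (think of $\mathbb{Z}$ inside $\mathbb{R}$), and ruling this out is essentially the content of the lemma itself.

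The paper sidesteps this entirely. Instead of a single accumulation point, it takes infinitely many distinct points $a_i\in X$ and, by saturation over a countable set, a single entourage $U_0$ with the balls $U_0[a_i]$ pairwise disjoint. One row of the pattern is then simply $\phi(x;a_i)\equiv x\in U_0[a_i]$, which is $2$-inconsistent by disjointness. For the second row the paper exploits nowhere-density \emph{inside each} $U_n[a_i]$: it builds a decreasing chain $U_0\supseteq U_1\supseteq\cdots$ in $\mathcal B$ together with witnesses $x_{i,n}\in U_n[a_i]$ satisfying $U_{n+1}[x_{i,n}]\cap X=\emptyset$, and takes
\[
\psi_n(x)\ \equiv\ \bigl(U_{n+1}[x]\cap X=\emptyset\ \wedge\ U_n[x]\cap X\neq\emptyset\bigr).
\]
These are $2$-inconsistent because the ``first scale at which $X$ appears near $x$'' is unique along the chain, and each $x_{i,n}$ witnesses $\phi(x;a_i)\wedge\psi_n(x)$ since $a_i\in U_n[x_{i,n}]\cap X$ by symmetry of $U_n$. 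The crucial point is that $a_i\in X$ is exactly what guarantees the clause $U_n[x]\cap X\neq\emptyset$; your single-accumulation-point setup loses this leverage, which is precisely why you found the design of the formulas to be the main obstacle.
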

\begin{proof}
We suppose towards a contradiction that $X \subseteq M$ is infinite, definable and nowhere dense.
 Let $(a_i:i<\omega)$ be a sequence of distinct points in $X$.
Applying $\aleph_1$-saturation we let $U_0\in \mathcal B$ be such that $U_0[a_i] \cap U_0[a_j] = \emptyset$ when $i \neq j$.
Now inductively construct elements $U_n \in \mathcal B$ and 
$x_{i,n}\in U_0[a_i]$ such that $a_i \in U_n[x_{i,n}]$ and $U_{n+1}[x_{i,n}] \cap X = \emptyset$ for all $i,n < \omega$.
Assume we have defined $U_m$ and $x_{i,m}$ for $m\leqslant n$. 
By assumption, $X$ is not dense in the ball $U_n[a_i]$ for any $i$, 
hence we can find some $U_{n+1}\in \mathcal B$ and points 
$x_{i,n}\in U_n[a_i]$ such that 
$U_{n+1}[x_{i,n}] \cap X = \emptyset$ for all $i$. 
If $U_n[x]=\pi(x,\bar{b}_n)$, then
the formulas
$$\phi(x, a_i)\equiv x \in U_0[a_i] \textrm{ and } \psi(x,\bar{b}_n, \bar{b}_{n+1})\equiv (U_{n+1}[x] \cap X = \emptyset \wedge 
U_{n}[x] \cap X \neq \emptyset)$$
witness that $T$ is not inp-minimal with respect to the home sort.
This contradiction shows that the lemma holds.
\end{proof}

We leave the simple topological proof of the following to the reader:

\begin{Cor}
Any definable closed subset of $M$ is the union of an open set and finitely many points.
Moreover, the closure of a definable open set is equal to 
itself plus finitely many points.
\end{Cor}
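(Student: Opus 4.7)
The plan is to reduce both statements to Lemma~\ref{lem_dense} by identifying a suitable definable ``boundary'' that cannot be dense in any ball.

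For the first assertion, given a definable closed $X\subseteq M$ I would set $Y=X\setminus\textrm{int}(X)$; this is definable because $\textrm{int}(X)$ can be written uniformly using the definable family of basic entourages. I claim $Y$ is not dense in any ball. Indeed, if $B$ were such a ball, then $B\subseteq\overline{Y}\subseteq X$ since $X$ is closed; taking any point $x_0$ of which $B$ is a neighborhood gives $x_0\in\textrm{int}(X)\cap B$, so $\textrm{int}(X)\cap B$ is a nonempty open subset of $B$ disjoint from $Y$, contradicting density of $Y$ in $B$. Lemma~\ref{lem_dense} therefore forces $Y$ finite, and $X=\textrm{int}(X)\cup Y$ is the required decomposition.

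For the second assertion, given a definable open $U\subseteq M$ I would look at $Z=\overline{U}\setminus U$, which is definable and closed. The key observation is that $Z$ has empty interior: any open $V\subseteq Z$ would be disjoint from $U$ yet contained in $\overline{U}$, forcing any $y\in V$ simultaneously to lie in $\overline{U}$ and to have $V$ as an open neighborhood disjoint from $U$. A closed set with empty interior contains no ball (balls have nonempty interior, being neighborhoods of their centers, and $M$ has no isolated points), so $Z$ cannot be dense in any ball, and Lemma~\ref{lem_dense} makes $Z$ finite. Then $\overline{U}=U\cup Z$ presents $\overline{U}$ as $U$ together with finitely many points.

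There is no genuine obstacle beyond bookkeeping; both halves rest on the same move of producing a nowhere-dense definable set to which the lemma applies, and the topological manipulations are routine.
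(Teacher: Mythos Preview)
Your argument is correct and is exactly the ``simple topological proof'' the paper leaves to the reader: in both halves you manufacture a definable nowhere-dense set (the non-interior points of the closed $X$, respectively the frontier $\overline{U}\setminus U$) and invoke Lemma~\ref{lem_dense} to force it finite.
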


The next lemma is a version of Lemma 3.19 of \cite{Goodrick}.
Given $R \subseteq M^2$ and $a \in M$ we define 
$R(a) = \{b \in M : (a,b) \in R\}$.
\begin{Lem}\label{lem_positivefun}
Let $R \subseteq M^2$ be a definable relation that for every $a\in M$, there is $V\in \mathcal B$ satisfying $V[a]\subseteq R(a)$. Then there are $U,V\in \mathcal B$ such that $V[b]\subseteq R(b)$ for every $b \in U[a]$.
\end{Lem}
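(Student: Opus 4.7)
The plan is to derive a contradiction with inp-minimality via an inp-pattern, in the spirit of Lemma~\ref{lem_dense}. Suppose toward a contradiction that the conclusion fails: for every $U,V\in\mathcal B$ there is $b\in U[a]$ with $V[b]\not\subseteq R(b)$. The hypothesis attaches to every point $c$ a non-trivial ``goodness filter'' $\{V\in\mathcal B:V[c]\subseteq R(c)\}$, and the failure of the conclusion says that bad points of arbitrarily fine scale accumulate at $a$.

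Fix, using axiom~(4), a descending chain of basic entourages $V_0\supseteq V_1\supseteq V_2\supseteq\cdots$ with $V_{n+1}\circ V_{n+1}\subseteq V_n$, and consider the two formulas
\[
\phi(x;\bar u,\bar u') \;\equiv\; x\in V_{\bar u}[a]\setminus V_{\bar u'}[a]
\qquad\text{and}\qquad
\psi(x;\bar u,\bar u') \;\equiv\; V_{\bar u}[x]\not\subseteq R(x)\,\wedge\, V_{\bar u'}[x]\subseteq R(x),
\]
each instantiated at the parameter pair $(V_n,V_{n+1})$. Both rows are $2$-inconsistent: for $\phi$, if $n<m$ then $V_m[a]\setminus V_{m+1}[a]\subseteq V_m[a]\subseteq V_{n+1}[a]$, which is disjoint from $V_n[a]\setminus V_{n+1}[a]$; for $\psi$, the conjunction $\psi_n\wedge\psi_m$ with $n<m$ forces $V_m[x]\subseteq V_{n+1}[x]\subseteq R(x)$ against $V_m[x]\not\subseteq R(x)$.

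The crux is arranging cross-consistency: for every pair $(m,n)$ one needs a point in the annulus $V_m[a]\setminus V_{m+1}[a]$ of goodness scale precisely $n$, i.e.\ satisfying $V_n[x]\not\subseteq R(x)$ and $V_{n+1}[x]\subseteq R(x)$. My plan is to build $(V_n)$ by a joint induction with this in mind. At stage $n+1$, having fixed $V_0,\dots,V_n$, I use $\aleph_1$-saturation together with the assumed failure of the conclusion to produce, for each $m\le n$, a witness $b_{m,n}\in V_m[a]\setminus V_{m+1}[a]$ with $V_n[b_{m,n}]\not\subseteq R(b_{m,n})$; then by the hypothesis (and axiom~(3)) I pick $V_{n+1}\in\mathcal B$ contained in the finitely many goodness filters of the $b_{m,n}$, with $V_{n+1}\subseteq V_n$ and $V_{n+1}\circ V_{n+1}\subseteq V_n$. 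This pins the scale of each $b_{m,n}$ to exactly $n$ and provides the required cross-witness.

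The main obstacle is guaranteeing that such witnesses $b_{m,n}$ can indeed be placed in the annulus — that the bad-at-scale-$V_n$ points are not all swallowed by the previously fixed smaller ball $V_{m+1}[a]$. This should be handled by applying Lemma~\ref{lem_dense} to the definable set $Y_{V_n}=\{b:V_n[b]\not\subseteq R(b)\}$: the failure of the conclusion at $a$ forces $Y_{V_n}$ to be infinite and to accumulate at $a$, hence dense in some ball, from which one extracts witnesses lying strictly outside any prescribed smaller ball. Once the inductive construction goes through, the resulting array of formulas forms an inp-pattern, contradicting the inp-minimality of $T$.
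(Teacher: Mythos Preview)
Your second family of formulas (the ``goodness scale'' conditions) is exactly the one the paper uses, and the inductive mechanism of choosing $V_{n+1}$ only after seeing the finitely many witnesses at stage $n$ is also the same. The difference is in the first family: the paper takes pairwise disjoint balls $U[a_i]$ around \emph{infinitely many distinct centers} $a_i$, whereas you take annuli $V_m[a]\setminus V_{m+1}[a]$ around a \emph{single} center $a$.

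That single-center choice creates the obstacle you identify, and your proposed fix does not close it. From the failure of the conclusion at $a$ you only know that $Y_{V_n}$ accumulates at $a$; Lemma~\ref{lem_dense} then tells you $Y_{V_n}$ is dense in \emph{some} ball $B$, but gives you no control over where $B$ sits relative to the already-fixed annulus $V_m[a]\setminus V_{m+1}[a]$. Nothing prevents all bad-at-scale-$V_n$ points near $a$ from lying inside $V_{m+1}[a]$, so the cross-consistency step does not go through. The paper's choice of disjoint balls avoids this entirely: the negation it assumes is the stronger ``for every $a$ and every $U,V$ there is a bad $b\in U[a]$'' (the conclusion of the lemma should be read existentially in $a$, as its application in Proposition~\ref{prop_interior} confirms), which hands you a bad point in each ball $U[a_i]$ at every scale directly, with no need to thread witnesses through annuli.

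If you adopt the paper's negation, your annulus scheme can in fact be rescued without Lemma~\ref{lem_dense}: under that assumption each $Y_{V_n}$ meets every ball, hence is dense in $M$, and so meets every annulus with nonempty interior. But at that point the disjoint-balls version is both simpler and avoids the side issue of arranging that $V_m[a]\setminus V_{m+1}[a]$ has nonempty interior.
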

\begin{proof}
We suppose otherwise towards a contradiction.
We suppose that for every $a \in M$ and $U,V \in \mathcal B$ there is a $b \in U[a]$ such that $V[b] \subsetneq R(b)$.
 Let $(a_i :i<\omega)$ be a sequence of pairwise distinct elements of $M$ and fix $U\in \mathcal B$ such that the balls $U[a_i]$ are pairwise disjoint.
 For each $i<\omega$, pick some $x_{i,0} \in U[a_i]$.
 Then choose $U_1 \in \mathcal B$ such that $U_1[x_{i,0}]\subseteq R(x_{i,0})$ for all $i$.
 Next pick points $x_{i,1}\in U[a_i]$ such that $U_1[x_{i,1}] \subsetneq R(x_{i,1})$ and choose $U_2\subset U_1$ such that $U_2[x_{i,1}] \subseteq R(x_{i,1})$ holds for all $i$.
 Iterating this, we obtain a decreasing sequence $(U_k : 1\leq k<\omega)$ of elements of $\mathcal B$ such that for each $i,k<\omega$, there is a $x_{i,k}\in U[a_i]$ such that $U_{k+1}[x_{i,k}]\subseteq R(x_{i,k})$ but $U_k[x_{i,k}] \subsetneq R(x_{i,k})$. Then the formulas 
$$x\in U[a_i] \textrm{ and } 
U_{k+1}[x]\subseteq R(x)\wedge U_k[x] \subsetneq R(x)$$ 
form an ict-pattern of size 2 and contradict inp-minimality.
\end{proof}

\begin{Lem}\label{lem_nondivide}
Let $X$ be a definable set and $a\in M$.
Suppose that $X \cap U[a]$ is infinite for every $U \in \mathcal B$.
Then $X$ does not divide over $a$.
\end{Lem}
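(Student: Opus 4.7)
The strategy is to argue by contradiction and extract an inp-pattern of depth $2$, contradicting inp-minimality. Suppose $X=\phi(x;b)$ divides over $a$, witnessed by an $a$-indiscernible sequence $(b_i)_{i<\omega}$ with $b_0=b$ and $\{\phi(x;b_i):i<\omega\}$ being $k$-inconsistent for some $k$. Since the property ``$\phi(x;y)\cap U[a]$ is infinite for every $U\in\mathcal B$'' is a partial type over $a$ in the variable $y$, the $a$-indiscernibility of $(b_i)$ ensures that $\phi(x;b_i)\cap U[a]$ is infinite for every $i$ and every $U\in\mathcal B$.

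The first row of the pattern will be the sequence $(\phi(x;b_i))_{i<\omega}$, which is $k$-inconsistent. For the second row, I plan to use pairwise disjoint ``annuli'' near $a$: pick a strictly decreasing sequence of entourages $W_0\supsetneq W_1\supsetneq\cdots$ in $\mathcal B$ and set $A_m:=W_m[a]\setminus W_{m+1}[a]$. These are pairwise disjoint, so $\{x\in A_m:m<\omega\}$ is a $2$-inconsistent family. For cross-consistency we need $\phi(x;b_i)\cap A_m\neq\emptyset$ for all $i,m$; I arrange this in two steps. First, since $\phi(x;b_0)\cap W_m[a]$ is infinite, by choosing $W_{m+1}$ strictly smaller than $W_m$ (using the basis properties of $\mathcal B$) I can ensure that $\phi(x;b_0)\cap A_m$ is nonempty for each $m$. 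Second, I transfer this to all $i$: by the usual Erd\H{o}s-Rado extraction, I replace $(b_i)$ with an infinite sub-sequence that is indiscernible over $(a,W_0,W_1,\ldots)$, chosen so that $b_0$ remains as its initial element. Sub-sequences preserve $k$-inconsistency and accumulation at $a$, so the new sequence still witnesses dividing, and nonemptiness of $\phi(x;b_0)\cap A_m$ now transfers to every $\phi(x;b_i)\cap A_m$ by indiscernibility over the parameters of the $W_m$'s. The resulting inp-pattern contradicts the inp-minimality of $T$ with respect to the home sort.

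The main obstacle I anticipate is the coordination of the choice of entourages $W_m$ with the sub-sequence extraction: one must exhibit a descending chain $W_m$ witnessing the accumulation of $\phi(x;b_0)$ while simultaneously securing uniform indiscernibility of the sub-sequence over them, and keep $b_0$ as the initial element so that the pre-chosen $W_m$'s continue to work. This accounting is the technical core of the argument, and closely parallels the inp-pattern constructions in the proofs of Lemmas~\ref{lem_dense} and~\ref{lem_positivefun}, following the ordered-case strategy of Goodrick~\cite{Goodrick} and Simon~\cite{Simon:dp-min} adapted to the uniform-space setting.
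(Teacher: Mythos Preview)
Your overall architecture---contradiction via an inp-pattern with rows $\{\phi(x;b_i)\}_i$ and annuli $\{W_m[a]\setminus W_{m+1}[a]\}_m$---is exactly the paper's strategy. The problem is in how you arrange cross-consistency.

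You propose to choose the $W_m$'s so that $\phi(x;b_0)$ meets each annulus, and then run an Erd\H{o}s--Rado extraction over $C=a\cup\{\text{parameters of the }W_m\}$ while \emph{keeping $b_0$ as the first element}. This last step is not available: standard extraction produces a new sequence realizing the EM-type of $(b_i)$ over $C$, not a literal subsequence, and certainly not one guaranteed to begin with $b_0$. Without $b_0$, the crucial formulas ``$\phi(x;y)\cap A_m\neq\emptyset$'' need not lie in the EM-type, since you only verified them for $b_0$ and they involve the parameters of the $W_m$, over which $(b_i)$ is \emph{not} assumed indiscernible. So after extraction you cannot conclude $\phi(x;b'_i)\cap A_m\neq\emptyset$. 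The obstacle you flag at the end is therefore real, and the remedy you sketch does not remove it.

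The paper's proof sidesteps this entirely: rather than fixing the annuli first and transferring, it builds the $U_n$'s inductively so that \emph{each} $U_{n+1}$ already works for all $b_i$ at once. Given $U_n$, every $X_{b_i}\cap U_n[a]$ is infinite (by $a$-indiscernibility, as you observed), so for each $i$ there is some $V\subseteq U_n$ with $(U_n[a]\setminus V[a])\cap X_{b_i}\neq\emptyset$; a single application of $\aleph_1$-saturation over the countably many $b_i$'s then yields one $U_{n+1}\subseteq U_n$ with $(U_n[a]\setminus U_{n+1}[a])\cap X_{b_i}\neq\emptyset$ for every $i$. No extraction, no coordination problem. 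If you rewrite your construction this way, your proof becomes correct and coincides with the paper's.
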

\begin{proof}
Let $\bar b$ be the parameters defining $X$ and let $\phi$ be a formula such that $X = \phi(M,\bar{b})$.
Let $X_{\bar{c}} = \phi(M, \bar{c})$.
 Assume that $X$ divides over $a$, so
there is an $a$-indiscernible sequence $\bar b = \bar b_0, \bar b_1,\ldots$ such that the intersection $\bigcap_{i<\omega} X_{\bar b_i}$ is empty. We build by induction a decreasing sequence $(U_j :j<\omega)$ of elements of $\mathcal B$ such that for any $i,j<\omega$, the intersection $(U_j[a]\setminus U_{j+1}[a])\cap X_{\bar b_i}$ is non-empty.
Suppose we have $U_0,\ldots U_n$.
For each $i < \omega$ there is a point in $U_n[a] \cap X_{\bar b_i}$ other then $a$.
Thus for all $i < \omega$ there is a $V \in \mathcal B$ such that $U_n[a]\setminus V[a]$ intersects $X_{\bar b_i}$.
An application of $\aleph_1$-saturation gives a $U_{n+1} \in \mathcal B$ such that $U_{n+1} \subseteq U_n$ and $U_{n}[a]\setminus U_{n+1}[a]$ intersects $X_{\bar b_i}$ for all $i < \omega$. 
We obtain an ict-pattern of size 2 by considering the formulas $x \in X_{\bar b_i}$ and $x\in U_j[a]\setminus U_{j+1}[a]$. This contradicts inp-minimality.
\end{proof}
\textit{For the remainder of this section we assume that $\mathcal{M}$ is NIP and hence dp-minimal.}
We say that $X, Y \subseteq M$ have the same germ at $a \in M$ if there is a $U\in \mathcal  B$ such that $U[a]\cap X = U[a]\cap Y$.

\begin{Lem}\label{lem_finitegerm}
Let $\phi(x;\bar y)$ be a formula, $x$ of sort $M$, and $a\in M$. There is a finite family $(\bar b_i :i <n)$ of parameters such that for any $\bar b\in M^{|\bar y|}$ there is an $i<n$ such that $\phi(M;\bar b)$ and $\phi(M; \bar b_i)$ have the same germ at $a$.
\end{Lem}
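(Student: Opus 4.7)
The plan is to derive a contradiction from the assumption that there are infinitely many germs of $\phi(x;\bar y)$ at $a$, by building an inp-pattern of depth~$2$ and invoking inp-minimality (which, together with NIP, is equivalent to dp-minimality). Suppose no finite family of parameters exhausts the germs. By $\aleph_1$-saturation extract a sequence $(\bar b_i)_{i<\omega}$ with pairwise distinct germs at $a$, and by an Ehrenfeucht--Mostowski argument over $\{a\}$ replace it with an indiscernible sequence; distinctness of germs is witnessed by the collection of formulas $\exists x\,(x\in V[a]\wedge \phi(x;\bar y_1)\not\leftrightarrow \phi(x;\bar y_2))$ for $V\in\mathcal B$, each of which holds on every pair of the original sequence, so the property is preserved by the extraction.

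Before building the pattern, reduce to the two-sided infinite case. If $\phi(M;\bar b_0)\cap U[a]$ is finite for some $U\in\mathcal B$, then by indiscernibility it has the same fixed finite cardinality for every $\bar b_i$, and so, for sufficiently small $V\subseteq U$, $\phi(M;\bar b_i)\cap V[a]$ equals either $\emptyset$ or $\{a\}$ according to whether $a\in\phi(M;\bar b_i)$; the latter is constant along an $\{a\}$-indiscernible sequence, so all germs coincide, contradicting distinctness. The symmetric argument disposes of the case in which $\neg\phi(M;\bar b_0)\cap U[a]$ is finite. Since germs differ pairwise, $\phi(M;\bar b_i)\triangle \phi(M;\bar b_j)$ meets every $V[a]$ for $i<j$; by $2$-indiscernibility one of the two one-sided differences does so uniformly, and after possibly swapping $\phi$ with $\neg\phi$ in what follows I may assume $S_i := \phi(M;\bar b_{2i})\setminus \phi(M;\bar b_{2i+1})$ accumulates at $a$ for every $i$.

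I then construct inductively a decreasing sequence $U_0\supseteq U_1\supseteq\cdots$ in $\mathcal B$ with $(U_n[a]\setminus U_{n+1}[a])\cap S_i\neq\emptyset$ for every $i,n<\omega$. At stage $n$, for any finite $i_1,\ldots,i_m$, pick $x_k\in S_{i_k}\cap U_n[a]\setminus\{a\}$ using accumulation, separate each $x_k$ from $a$ by a basic entourage using the Hausdorff property~(1) of $\mathcal B$, and intersect these with $U_n$ using property~(3) to produce a candidate $U_{n+1}$; finite satisfiability together with $\aleph_1$-saturation then yields a single $U_{n+1}$ working for all $i<\omega$ simultaneously. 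This gives an inp-pattern: row~$1$ consists of $\phi(x;\bar b_{2i})\wedge\neg\phi(x;\bar b_{2i+1})$ indexed by $i<\omega$, and any $K+1$ such formulas force $K+1$ alternations of $\phi(x;\bar b_{\cdot})$ along $(\bar b_i)$, so row~$1$ is $(K+1)$-inconsistent where $K$ is the NIP alternation rank of $\phi$; row~$2$ consists of the annulus formulas $x\in U_j[a]\setminus U_{j+1}[a]$ indexed by $j<\omega$, which is $2$-inconsistent since the annuli are pairwise disjoint; cross-consistency is exactly the content of the construction.

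The main obstacle is the inductive construction of the entourages $U_n$: the single $U_{n+1}$ must serve uniformly for all $i<\omega$, and this uniformity is supplied by $\aleph_1$-saturation together with the Hausdorff property of $\mathcal B$. Once the pattern is in place it contradicts inp-minimality with respect to the home sort, hence contradicts dp-minimality, completing the proof.
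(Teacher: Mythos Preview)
Your argument is correct and takes a genuinely different route from the paper. The paper's proof is short and leans on the forking machinery: fix a small model $M_0 \ni a$; if $\bar b_0 \equiv_{M_0} \bar b_1$ then $\phi(x;\bar b_0)\triangle\phi(x;\bar b_1)$ splits over $M_0$, hence (using the NIP coincidence of non-forking, non-dividing and non-splitting over models) divides over $M_0$, and Lemma~\ref{lem_nondivide} then forces this symmetric difference to miss some ball around $a$. Thus the germ depends only on $\mathrm{tp}(\bar b/M_0)$, and an $M_0$-indiscernible extraction together with $|L|^+$-saturation rules out infinitely many germ classes. You instead build the inp-pattern by hand: the annulus row you construct is exactly the device inside the proof of Lemma~\ref{lem_nondivide}, and in place of the splitting/dividing equivalence you invoke NIP directly through the alternation bound on $\phi$ to make the first row $k$-inconsistent. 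What this buys is a self-contained argument with no appeal to the non-forking calculus; the cost is length, and the fact that you are essentially re-deriving Lemma~\ref{lem_nondivide} inside the proof. Two small remarks: your reduction to the ``two-sided infinite case'' is never used afterwards, since your third step only needs that the symmetric difference accumulates at $a$, which already follows from the germs being distinct; and the claim that $\phi(M;\bar b_i)\cap U[a]$ has ``the same fixed finite cardinality for every $\bar b_i$'' would strictly require indiscernibility over the parameter defining $U$, not just over $\{a\}$ --- but the conclusion you draw is fine anyway, since ``$\phi(M;\bar b)$ does not accumulate at $a$'' is itself an $\{a\}$-definable condition on $\bar b$.
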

\begin{proof}
Let $M_0$ be a small submodel of $\mathcal M$ containing $a$. Let $\bar b_0$ and $\bar b_1$ have the same type over $M_0$. In NIP theories, non-forking, non-dividing and non-splitting all coincide over models. The formula $\phi(x;\bar b_0)\triangle \phi(x;\bar b_1)$ splits over $M_0$, therefore it divides over $M_0$. Lemma~\ref{lem_nondivide} shows that
 $$U[a]\cap (\phi(x;\bar b_0)\triangle \phi(x;\bar b_1)) = \emptyset \quad \text{for some } U \in \mathcal B.$$ This means that $\phi(x;\bar b_0)$ and $\phi(x;\bar b_1)$ have the same germ at $a$. The lemma follows by $|L|^+$-saturation.
\end{proof}

We now assume that $M$ admits a definable abelian group operation.
We assume that the group operations are continuous and that the basic entourages are invariant under the group action, i.e. $U[0] + a = U[a]$ for all $U \in \mathcal B$ and $a \in M$.
We make the second assumption without loss of generality.
If we we only assume that the group operations are continuous then we can define an invariant uniform structure whose entourages are of the form 
$$\{ (x,y) \in M\times M : x-y \in U[0]\} \quad \text{for } U \in \mathcal B.$$
We also assume that $M$ is divisible and, more precisely, assume that for every $U\in \mathcal B$ and $n$ there is a $V \in \mathcal B$ such that for all $y \in V[0]$ there is an $x \in U[0]$ such that $nx = y$.
This assumption will hold if $M$ is a field, $+$ is the field addition, and the uniform structure on $M$ is given by a definable order or valuation.
Under these assumptions we prove:

\begin{Prop}\label{prop_interior}
Every infinite definable subset of $M$ has nonempty interior.
\end{Prop}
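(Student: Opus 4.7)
Plan:

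I would argue by contradiction: suppose $X$ is infinite and definable, with empty interior. By Lemma~\ref{lem_dense}, $X$ is dense in some ball $U[a]$. Using the group operation and the invariance of the entourages, translate to reduce to the case where $X$ is dense in $U[0]$; replacing $X$ by $X - x_0$ for some $x_0 \in X \cap U[0]$, I may further assume $0 \in X$.

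The key tool is Lemma~\ref{lem_finitegerm} applied to the formula $\phi(x;y) := (x+y \in X)$ at the point $a = 0$: the family $\{X - b : b \in M\}$ has only finitely many germs at $0$. Since the infinite set $X \cap U[0]$ consists of parameters $b$ for which $0 \in X - b$, pigeonhole produces a definable infinite $B \subseteq X \cap U[0]$ such that every $b \in B$ yields the same germ $G$ at $0$ of $X - b$. By $\aleph_1$-saturation applied to the germ witness, I extract a single entourage $W \in \mathcal{B}$ with $(X - b) \cap W[0] = H := G \cap W[0]$ for every $b \in B$; equivalently, $X \cap W[b] = H + b$ for all $b \in B$.

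Next I apply Lemma~\ref{lem_dense} to the infinite definable set $B$ itself: it is dense in some ball $V[c]$. Fixing $b_0 \in B \cap V[c]$ and letting $b \in B \cap V[c]$ range over points close to $b_0$, the two identities $X \cap W[b] = H + b$ and $X \cap W[b_0] = H + b_0$ must agree on $W[b] \cap W[b_0]$; this forces $H$ to coincide with its translate $H + (b - b_0)$ on a common neighborhood of $0$. Letting $s = b - b_0$ vary, the definable set $T := \{s \in M : H \cap W'[0] = (H + s) \cap W'[0] \text{ for some } W' \in \mathcal{B}\}$ is a subgroup of $M$ containing $B - b_0$, and hence is dense in a neighborhood $V'[0]$ of $0$.

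The main obstacle is now to upgrade the density of $T$ to the existence of an open neighborhood of $0$ inside $T$; once this is achieved, local translation-invariance of $H$ under every $t$ in such a neighborhood, combined with $0 \in H$, yields an open neighborhood contained in $H \subseteq X$, contradicting empty interior. The divisibility hypothesis enters precisely here: since $T$ is a definable subgroup of the divisible topological group $M$, I would combine the openness of the scaling maps $x \mapsto nx$ at $0$ with a further application of Lemma~\ref{lem_positivefun} to a relation encoding the local invariance, to conclude that $T$ must contain an open neighborhood of $0$. I expect this to be the technically delicate step, since density of a definable subgroup in a neighborhood is not by itself enough to give openness (as the example of $\mathbb{Q} \subseteq \mathbb{R}$, with a different structure, illustrates), and it is the combination with divisibility and with dp-minimality (used through Lemma~\ref{lem_positivefun}) that makes the argument go through.
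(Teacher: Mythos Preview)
Your overall strategy matches the paper's, but there are two genuine gaps.

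\textbf{The saturation step is wrong.} After pigeonholing to an infinite definable $B \subseteq X$ on which all $X-b$ share a common germ at $0$, you claim that $\aleph_1$-saturation yields a single $W \in \mathcal{B}$ with $(X-b)\cap W[0]$ constant for all $b \in B$. Saturation realizes types over small parameter sets; it does not produce witnesses for $\forall b \in B$ statements when $B$ is infinite. This uniformity is exactly what Lemma~\ref{lem_positivefun} is for, and the paper invokes it here: define $R(x,y) := (y\in X \leftrightarrow y \in X+x)$, note that each $b\in X$ has $U[b]\subseteq R(b,M)$ for some $U$, and apply Lemma~\ref{lem_positivefun} (with $X$ temporarily taken as the home sort) to obtain a single $U$ working for all $b$ in some open set where $X$ is dense. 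The paper also first replaces $X$ by the infinite $X_i$ so that the germ condition holds for \emph{all} elements of $X$, not just a subset $B$; this simplifies the rest.

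\textbf{The endgame is missing its key idea.} You correctly identify that density of your stabilizer $T$ does not suffice and that divisibility must enter, but your plan (``combine openness of $x\mapsto nx$ with Lemma~\ref{lem_positivefun}'') is not an argument. The paper's idea is concrete and does not pass through a stabilizer subgroup. After arranging the uniform property
\[
\boxtimes\quad X \text{ dense in } U[0],\ \text{and } X\cap U[0] = (X-b)\cap U[0]\ \text{for every } b\in X\cap U[0],
\]
one shows: (i) for $g,h\in X\cap V[0]$ (where $V[0]-V[0]\subseteq U[0]$), the difference $g-h$ lies in $X$; and (ii) for \emph{any} $g$ in a small enough ball $W[0]$, some multiple $kg$ with $1\le k\le n$ lies in $X$, where $n$ bounds the number of germs of translates. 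Claim (ii) is the crux: among $X, X-g,\ldots, X-(n-1)g$ two must share a germ at $0$, and combining this with density of $X$ in $U[0]$ and claim (i) forces $(k'-k)g\in X$. Now divisibility finishes: given $g\in W'[0]$ with $W'$ chosen so that $g = n!\cdot h$ for some $h\in W[0]$, one has $kh\in X$ for some $k\le n$, and repeated differences via (i) give $g = (n!/k)\cdot(kh)\in X$. This is the step your outline does not supply.
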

\begin{proof}
Let $X$ be an infinite definable subset of $M$.
Consider the family of translates $$\{ X - b : b\in M\}.$$ By Lemma \ref{lem_finitegerm}, there are finitely many elements $(b_i:i<n)$ such that for any $b\in M$ there is an $i < n$ such that $X-b$ and $X-b_i$ have the same germ at $0$.
For each $i < n$ let $X_i$ be the set of $b \in X$ such that $X - b_i$ and $X-b$ have the same germ at 0. 
Fix $i < n$ such that $X_i$ is infinite.
Let $b \in X_i$ and let $V \in \mathcal B$ be such that $X- b$ and $X - b_i$ agree on $V[0]$.
It is easy to see that if $c \in V[0] \cap (X_i - b)$ then $c \in X_i - b_i$.
Likewise, if $c \in V[0]$ and $c \in X_i - b_i$ then $c \in X_i - b$.
Thus, $X_i - b$ has the same germ at 0 as $X_i - b_i$ for all $b \in X_i$.
Replacing $X$ by $X_i$, we may assume $X-b$ and $X-b'$ have the same germ at 0 for all $b,b' \in X$.

By Lemma \ref{lem_dense}, $X$ is dense in some ball.
It follows that $X$ has no isolated points.
 Translating $X$, we may assume that $0\in X$.
 For any $b\in X$, there is a $U \in \mathcal B$ such that $X$ and $X-b$ coincide on $U[0]$, equivalently $X$ and $X+b$ coincide on $U[b]$.
 Let $R$ be the relation given by
 $$R(x,y) := (y\in X \leftrightarrow y\in X+x).$$ 
We have shown that for each $b \in X$ there is a $U \in B$ such that $U[b] \subseteq R(b,M)$.
For the moment take $X$ to be the distinguished home sort of $\mathcal M$.
As $X$ has no isolated points and has dp-rank $1$ we can apply Lemma \ref{lem_positivefun} to get a $V\in \mathcal B$ and an open $W \subseteq M$ such that $X$ is dense in $W$ and $V[x]\subseteq R(x,M)$ for all $x \in W \cap X$.
Translating again, we may assume there is a $U \in \mathcal B$ such that $X$ is dense in $U[0]$ and $V[x]\subseteq R(x,M)$ for every $x\in U[0] \cap X$.
Finally, we may replace both $U$ and $V$ by some $U[0] \subseteq U\cap V$. Hence to summarize, we have the following assumption on $X$:
$$ \boxtimes \quad X \text{ is dense in } U[0] \text{ and } X \text{ and } X-b \text{ coincide on } U[0] \text{ for any } b \in X \cap U[0].$$
Pick $V\in B$ such that $V[0]-V[0]\subseteq U[0]$.\\
\emph{Claim}: If $g,h \in X\cap V[0]$, then $g-h$ is in $X$.\\
\emph{Proof of claim}: As $-h \in (X-h)\cap U[0]$, by $\boxtimes$, we also have $-h\in X\cap U[0]$. Then from $g\in X\cap U[0]$, we deduce $g\in X+h \cap U[0]$ and hence $g-h\in X$ as required.\\
Suppose that the family $\{X-b :b \in M\}$ has strictly less than $n$ distinct germs at 0. Fix $W \in \mathcal B$ such that the sum of any $n!$ elements from $W[0]$ falls in $V[0]$.\\
\emph{Claim}: For any $g\in W[0]$ we have $k\cdot g \in X$ for some $k\leq n$.\\
\emph{Proof of claim}: By Lemma \ref{lem_finitegerm} and choice of $n$, there are distinct $k,k'< n$ such that $X-kg$ and $X-k'g$ have the same germ at 0.
We suppose that $k < k'$.
 As $X$ is dense in $U[0]$, there is an $h \in V[0]$ such that $h \in X - kg$ and $h\in X - k'g$ and $kg +h, k'g + h \in V[0]$.
As $k'g+h, kg + h \in X \cap V[0]$ we have $(k'-k)g \in X$. This proves the claim.\\
Applying the assumptions on $M$ we let $W' \in \mathcal B$ be such that $W'[0] \subseteq W[0]$ and if $y \in W'[0]$ then there is an $x \in W[0]$ such that $n! \cdot x = y$.
Now pick some $g\in W'[0]$. Let $h\in W[0]$ such that $n!\cdot h = g$. For some $k\leq n$, $k\cdot h \in X$. But then $(n!/k)(kh)=g \in X$, using the first claim. Hence $W'[0]\subseteq X$.
Thus $X$ has nonempty interior.
\end{proof}

We leave the easy topological proof of the following corollary to the reader:

\begin{Cor}
Any definable subset of $M$ is the union of a definable open set and finitely many points.
\end{Cor}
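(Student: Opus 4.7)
The plan is to take the open set to be the topological interior $U := \mathrm{int}(X)$ of $X$ and show that $F := X \setminus U$ is finite. First I would observe that the interior is definable: since $\mathcal B$ is a definable uniform structure, say $\mathcal B = \{\varphi(M^2,\bar c) : \bar c \in D\}$, we have
\[
\mathrm{int}(X) \;=\; \{\, x \in X : (\exists \bar c \in D)\ \varphi(M,\bar c)[x] \subseteq X \,\},
\]
which is definable. Hence $F = X \setminus U$ is a definable subset of $M$.

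Now I would argue for a contradiction: suppose $F$ is infinite. By Proposition~\ref{prop_interior}, $F$ has nonempty interior, so there exist $a \in F$ and $V \in \mathcal B$ with $V[a] \subseteq F$. Since $F \subseteq X$, this gives $V[a] \subseteq X$, whence $a \in \mathrm{int}(X) = U$, contradicting $a \in X \setminus U$. Therefore $F$ is finite, and $X = U \cup F$ is the required decomposition. There is no real obstacle here; the content is entirely in Proposition~\ref{prop_interior}, together with the observation that definability of the uniform structure makes the interior operator send definable sets to definable sets.
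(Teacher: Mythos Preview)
Your argument is correct and is exactly the intended one: the paper itself does not spell out a proof, writing only ``We leave the easy topological proof of the following corollary to the reader,'' and your decomposition $X = \mathrm{int}(X) \cup (X \setminus \mathrm{int}(X))$ together with Proposition~\ref{prop_interior} is precisely the obvious route.
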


For this one can show:

\begin{Cor}\label{cor:interior}
A definable subset of $M^n$ has dp-rank $n$ if and only if it has nonempty interior.
\end{Cor}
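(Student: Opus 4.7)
The forward direction is an easy consequence of sub-additivity of dp-rank: a nonempty open box $U_1\times\cdots\times U_n$ inside $X$ has each factor an infinite, hence dp-rank-$1$, subset of $M$, and a box of $n$ dp-rank-$1$ factors has dp-rank $n$. So I would dispose of this direction in one sentence and focus on the converse, which I propose to prove by induction on $n$, with the base case $n=1$ being Proposition \ref{prop_interior}.

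For the inductive step, let $X \subseteq M^{n-1}\times M$ have dp-rank $n$, and denote the vertical fibers by $X_{\bar b}$. The plan begins with two reductions, each preserving dp-rank $n$ modulo a set of smaller rank. First, replace $X$ by $X' := \{(\bar b, a)\in X : a \in \operatorname{int}(X_{\bar b})\}$: by the preceding corollary every fiber is an open set plus finitely many points, so $X\setminus X'$ has finite vertical fibers and hence dp-rank at most $n-1$, leaving $X'$ of dp-rank $n$. Second, sub-additivity of dp-rank forces the projection $Y := \pi_{M^{n-1}}(X')$ to have dp-rank exactly $n-1$; the inductive hypothesis then gives $Y$ nonempty interior $W$, and intersecting with $W\times M$ removes only a piece sitting over $Y\setminus W$, a set of empty interior and hence (by induction, contrapositively) of dp-rank $<n-1$. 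After these reductions every vertical fiber above the open set $W$ is a nonempty open subset of $M$.

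What remains is a uniformity step, and this is the main obstacle: passing from ``every fiber above $W$ contains some ball'' to ``$X$ itself contains an open box''. My plan is to exploit Lemma \ref{lem_finitegerm}: pick a generic $a_0 \in M$, so that the lemma applied to the defining formula of $X$ yields finitely many parameters $\bar b_1,\dots,\bar b_k$ such that every fiber $X_{\bar b}$ has the same germ at $a_0$ as some $X_{\bar b_j}$. Stratifying $W$ by the resulting definable equivalence and applying sub-additivity once more picks out a stratum $W^{(j_0)}$ of dp-rank $n-1$, hence by induction with nonempty interior. On this interior all fibers share a common germ at $a_0$, so — provided $a_0$ has been chosen so that it lies in $\operatorname{int}(X_{\bar b_{j_0}})$ — they share a common ball $V[a_0]$, which yields the desired open box $W^{(j_0),\circ}\times V[a_0]\subseteq X$. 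The most delicate point I expect is the coordination between the choice of $a_0$ and the choice of the representative $\bar b_{j_0}$, so that $a_0$ actually sits in the interior of the selected fiber; this should be handled by a genericity argument over a small model, using that $\pi_M(X')$ has dp-rank $1$ and therefore nonempty interior by Proposition \ref{prop_interior}.
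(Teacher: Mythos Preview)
The reductions are fine, but the uniformity step has a genuine gap, and it is not the coordination issue you flag. The inference ``all fibers over $W^{(j_0)}$ share the germ of $X_{\bar b_{j_0}}$ at $a_0$, hence they share a common ball $V[a_0]$'' is false: having the same germ means agreement on \emph{some} $U[a_0]$, but this $U$ depends on $\bar b$. Concretely, take $M$ a saturated dp-minimal valued field, $X=\{(b,a)\in M^2 : b\neq 0,\ v(a)\geq v(b)\}$, $W=M\setminus\{0\}$, and $a_0=0$. Each fiber $X_b$ is the open ball of radius $v(b)$ about $0$; every such ball has the \emph{full} germ at $0$, so there is exactly one germ class, yet $\bigcap_{b\neq 0} X_b=\{0\}$ contains no ball. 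The same phenomenon shows that your strata $W^{(j)}$ are only ind-definable: each is an increasing union over $U\in\mathcal B$ of definable sets, and since $\mathcal B$ is a definable family in the model rather than a small external one, saturation does not collapse this to a single $U$. Thus you cannot even apply the inductive hypothesis to a stratum. Your genericity remark about $a_0$ would have to do real work here---note that $a_0=0$ is the unique bad choice in the example above---but you have not supplied that argument, and Lemma~\ref{lem_finitegerm} by itself does not provide the needed uniformity.

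For comparison, the paper does not attempt a self-contained proof: it handles $n=1$ via Proposition~\ref{prop_interior} and then cites \cite[Proposition~3.6]{Simon:dp-rank}, observing that the argument written there for dp-minimal expansions of dense linear orders goes through \emph{mutatis mutandis} once one knows that every definable subset of $M$ is an open set plus finitely many points. The substantive uniformity argument is outsourced to that reference.
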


\begin{proof}
Proposition~\ref{prop_interior} above shows that the Corollary holds in the case $n = 1$.
Proposition 3.6 of \cite{Simon:dp-rank} shows that the Corollary holds if $M$ expands a dense linear order and every definable subset of $M$ is a union of an open set and finitely many points. 
The proof of Proposition 3.6 of \cite{Simon:dp-rank} goes through \textit{mutatis mutandis} in our setting.
\end{proof}

The following Proposition is crucial in the proof that a dp-minimal valued field is henselian.

\begin{Prop}\label{prop_intpreservation}
Let $X\subseteq M^n$ be a definable set with non-empty interior. Let $f:M^n \to M^n$ be a definable finite-to-one function. Then $f(X)$ has non-empty interior.
\end{Prop}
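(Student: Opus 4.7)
The plan is to reduce the claim to dp-rank and then invoke Corollary~\ref{cor:interior} at both ends. By Corollary~\ref{cor:interior}, the hypothesis that $X$ has non-empty interior is equivalent to $X$ having dp-rank $n$, and the conclusion that $f(X)$ has non-empty interior is equivalent to $f(X)$ having dp-rank $n$. So it suffices to show that dp-rank in $M^n$ is preserved by definable finite-to-one maps. Since $f(X) \subseteq M^n$, its dp-rank is at most $n$, so the content of the claim is the inequality $\mathrm{dp\text{-}rk}(f(X)) \geq n$.

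To produce this lower bound I would work in a sufficiently saturated elementary extension and pick a parameter set $A$ over which $X$, $f$, and all intermediate data are defined. Using Corollary~\ref{cor:interior} and saturation, fix a point $x \in X$ with $\mathrm{dp\text{-}rk}(x/A) = n$. Then $f(x) \in f(X)$, and I would invoke sub-additivity of dp-rank:
\[
\mathrm{dp\text{-}rk}(x, f(x)/A) \;\leq\; \mathrm{dp\text{-}rk}(x/A f(x)) + \mathrm{dp\text{-}rk}(f(x)/A).
\]
The left-hand side equals $\mathrm{dp\text{-}rk}(x/A) = n$ because $f(x)$ is definable from $x$ over $A$. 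Since $f$ is finite-to-one, $x$ lies in a finite $A f(x)$-definable set, and so $\mathrm{dp\text{-}rk}(x/A f(x)) = 0$. Combining, $\mathrm{dp\text{-}rk}(f(x)/A) \geq n$, hence $f(X)$ has dp-rank $n$, and another application of Corollary~\ref{cor:interior} finishes the proof.

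The main obstacle I expect is ensuring that the standard dp-rank calculus, and in particular sub-additivity, is available in the present multi-sorted setting where dp-minimality is only assumed with respect to the home sort; one must check that the tuples $x$ and $f(x)$ land in $M^n$ (which they do by hypothesis) so that the relevant ict-patterns involve only home-sort variables. If invoking general sub-additivity feels heavy-handed, an alternative is to argue directly: assume for contradiction that $f(X)$ has empty interior in $M^n$. Using Corollary~\ref{cor:interior} and its proof strategy in $M^n$, find formulas witnessing $\mathrm{dp\text{-}rk}(f(X)) < n$, pull them back through $f$ to a fiber-by-fiber decomposition (finite fibers contribute dp-rank $0$), and combine with a family witnessing full dp-rank on $X$ to obtain an ict-pattern of size larger than allowed, contradicting dp-minimality of the home sort. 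Either route works, but the sub-additivity route above is shorter and conceptually cleanest.
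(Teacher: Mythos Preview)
Your proposal is correct and follows the same approach as the paper: the paper's proof is the single sentence ``As finite-to-one definable functions preserve dp-rank the proposition follows immediately from Corollary~\ref{cor:interior},'' and your argument via sub-additivity is precisely the standard justification of the preservation fact the paper invokes. Your caveat about the multi-sorted setting is well taken but harmless here, since all tuples in question lie in powers of the home sort $M$.
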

\begin{proof}
As finite-to-one definable functions preserve dp-rank the proposition follows immediately from Corollary~\ref{cor:interior}.
\end{proof}

\section{dp-minimal valued fields}\label{section:dpminfields}
\textit{Throughout this section $(F,v)$ is a valued field.}
We assume that the reader has some familiarity with valuation theory.
We let $Fv$ the residue field of $(F,v)$, $\mathcal{O}_v$ be the valuation ring and $\mathcal{M}_v$ be the maximal ideal of $\mathcal{O}_v$.
We denote the henselization of $(F,v)$ by $(F^h, v^h)$.
Given a function $p: F^n \rightarrow F^m$ such that $p = (p_1,\ldots, p_m)$ for some $p_1,\ldots, p_m \in F[X_1,\ldots,X_n]$ we let $J_p(a)$ be the Jacobian of $p$ at $a \in F^n$.

\begin{Lem}\label{lem_homeo}
 Let $p\in F[X_1,\ldots,X_n]^n$ and let $B\subseteq (F^h)^n$ be an open polydisc. 
 Suppose that $J_p(a)\neq 0$ for some $a\in (B\cap F^n)$. There is an open polydisc $U\subseteq B$ with $a\in U$ such that the restriction of $p$ to $U$ is injective.\end{Lem}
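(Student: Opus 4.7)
The plan is a direct ultrametric Taylor-expansion argument: no Hensel's lemma or saturation is needed, everything reduces to the fact that for $x,y$ close to $a$ the linear term in the expansion of $p(y)-p(x)$ dominates the higher-order remainder in valuation. Write $D := J_p(a)$, so by hypothesis $\det D \neq 0$. For $x,y \in (F^h)^n$ perform the multivariable Taylor expansion
$$ p(y) - p(x) \;=\; J_p(x)\cdot(y-x) \;+\; R(x,\,y-x), $$
where $R=(R_1,\ldots,R_n)$ is a polynomial map whose components, viewed as polynomials in the second argument $h$, are sums of monomials of total $h$-degree at least $2$. The goal is to choose $U \subseteq B$ so small that for all distinct $x,y\in U$ some coordinate of $J_p(x)(y-x)$ has strictly smaller valuation than the corresponding coordinate of $R(x,y-x)$; the ultrametric strict inequality then forces that coordinate of $p(y)-p(x)$ to be nonzero, whence $p|_U$ is injective.

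First I would shrink $B$ to a polydisc $U_1$ around $a$ on which $J_p$ stays ``uniformly invertible''. Since $\det J_p(x)$ is a polynomial nonzero at $a$, continuity gives $U_1$ with $v(\det J_p(x)) = v(\det J_p(a))$ throughout $U_1$, and then Cramer's rule bounds the valuations of the entries of $J_p(x)^{-1}$ below by a uniform constant $-c$. Applying $J_p(x)^{-1}$ to $J_p(x)h$ yields, for every $x\in U_1$ and every nonzero $h\in (F^h)^n$,
$$ \min_i v\bigl((J_p(x)h)_i\bigr) \;\le\; v(h) + c, \qquad \text{where } v(h) := \min_j v(h_j). $$
On the other hand each monomial of each $R_i$ has $h$-degree at least $2$, and on the bounded polydisc $U_1$ the $x$-coefficients $c_\alpha(x)$ have valuations bounded uniformly below, so there is a constant $C$ with $\min_i v\bigl(R_i(x,h)\bigr) \ge 2\,v(h) - C$ for all $x\in U_1$ and all $h$.

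Finally I shrink $U_1$ to a polydisc $U \subseteq U_1$ centered at $a$ of radius so large that every $h = y-x$ with $x,y\in U$ satisfies $v(h) > c + C$; then the linear-term estimate strictly beats the remainder estimate coordinatewise, and $p|_U$ is injective as above. The only real obstacle is bookkeeping: ensuring that the constants $c$ and $C$ can be chosen uniformly on a single polydisc $U_1$. Everything else is formal manipulation of the ultrametric inequality, and the fact that we work inside $F^h$ rather than $F$ plays no special role here — the whole argument is a generic ultrametric-calculus statement about polynomial maps.
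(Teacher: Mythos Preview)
Your argument is correct: the ultrametric Taylor expansion with a uniform invertibility bound on $J_p(x)$ (via Cramer's rule and the constancy of $v(\det J_p(x))$ near $a$) and a uniform quadratic bound on the remainder is exactly the standard route to local injectivity of polynomial maps over valued fields, and your bookkeeping with the constants $c,C$ goes through. One small point of care: the estimate $\min_i v(R_i(x,h)) \ge 2v(h) - C$ implicitly uses either $v(h)\ge 0$ or the finiteness of the monomial set; since you are shrinking anyway, arranging $v(h)\ge 0$ is harmless, but it is worth saying explicitly. Also, ``radius so large'' should read as ``valuative radius $\gamma$ so large'', i.e.\ the ball so small.

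The paper, by contrast, gives no argument at all: it simply cites \cite[Theorem~7.4]{PZ78} (Prestel--Ziegler), which is a general local inverse-function/injectivity result for polynomial maps over henselian (indeed arbitrary topological) fields. Your proof is essentially a self-contained reproof of the relevant part of that theorem in the ultrametric case. What you gain is independence from an external reference and transparency about why henselianity is irrelevant for mere injectivity (as you observe, nothing in the argument uses $F^h$ specifically); what the citation buys is brevity and the slightly stronger conclusion available in \cite{PZ78} (local homeomorphism onto an open set), though only injectivity is used downstream in the paper.
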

\begin{proof}
This follows immediately from
\cite[Theorem 7.4]{PZ78}.
\end{proof}

\begin{Prop}\label{prop_pofu}
Suppose that $(F,v)$ is dp-minimal.
 Let $p_1,\ldots,p_n \in F[X_1,\ldots,X_n]$ and $p=(p_1,\ldots,p_n)$.
If $J_p(a)\neq 0$ at $a \in F$ then $p(U)$ has non-empty interior for all nonempty open neighborhoods $U$ of $a$.
\end{Prop}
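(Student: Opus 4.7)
The plan is to combine Lemma~\ref{lem_homeo}, which provides local injectivity of $p$ where the Jacobian is nonzero, with the preservation of dp-rank under finite-to-one definable maps that underlies Proposition~\ref{prop_intpreservation}.

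Concretely, I would fix a nonempty open neighborhood $U$ of $a$ in $F^n$. After shrinking, one may assume $U$ is an open polydisc. Let $B \subseteq (F^h)^n$ be the corresponding open polydisc in the henselization with the same valuative radii, so that $U = B \cap F^n$ and $a \in B \cap F^n$. Lemma~\ref{lem_homeo} then furnishes an open polydisc $U' \subseteq B$ containing $a$ on which $p$ is injective as a map into $(F^h)^n$. The intersection $U'' := U' \cap F^n$ is then a nonempty open polydisc in $F^n$, contained in $U$, containing $a$, and the restriction $p|_{U''}$ is an injection into $F^n$.

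By Corollary~\ref{cor:interior}, the nonempty open set $U''$ has dp-rank $n$. The restriction $p|_{U''}$ is a definable injection, hence in particular finite-to-one, and finite-to-one definable maps preserve dp-rank; this is the fact invoked in the short proof of Proposition~\ref{prop_intpreservation}, and it applies equally to maps between arbitrary definable sets. Therefore $p(U'')$ also has dp-rank $n$, and applying the converse direction of Corollary~\ref{cor:interior}, the set $p(U'')$ has nonempty interior. Since $p(U'') \subseteq p(U)$, we conclude that $p(U)$ has nonempty interior.

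The only delicate point is that Lemma~\ref{lem_homeo} produces injectivity over the henselization, but this transfers to $F$ for free, since $F \subseteq F^h$ and the polydiscs involved restrict correctly to $F^n$. Otherwise the proof is a direct assembly of results already established in the paper, so I do not anticipate any genuine obstacle.
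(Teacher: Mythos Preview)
Your proof is correct and follows essentially the same route as the paper's one-line argument: use Lemma~\ref{lem_homeo} to get local injectivity, then invoke Proposition~\ref{prop_intpreservation} (whose content is precisely the dp-rank preservation via Corollary~\ref{cor:interior} that you spell out). Your version is simply a more detailed unpacking of the paper's proof; in particular, you correctly handle the minor point that $p$ need not be globally finite-to-one by applying the rank-preservation directly to the restricted injection $p|_{U''}$ rather than to a total map $M^n\to M^n$, which is exactly what the one-line proof implicitly intends.
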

\begin{proof}
This follows immediately from the previous lemma along with Proposition \ref{prop_intpreservation}.
\end{proof}

The following lemma is included in the proof of the weakly o-minimal case in \cite{MMS} and stated for arbitrary fields in \cite{G14}.
This lemma goes back to \cite{mmv}.

\begin{Lem}\label{lem:ply}
Let $K$ be a field extension of $F$ and let $\alpha \in K \setminus F$ be algebraic over $F$. Let $\alpha = \alpha_1,\ldots,\alpha_n$ be the conjugates of $\alpha$ over $F$ and let $g$ be given by:
\[g(X_0,\ldots,X_{n-1},Y) := \prod_{i=1}^{n}\left (Y - \sum_{j=0}^{n-1} \alpha_i^j X_j \right).\]
Then $g \in F[X_0,\ldots,X_{n - 1},Y]$ and there are $G_0,\ldots,G_{n - 1} \in F[X_0,\ldots,X_{n - 1}]$  such that
\[g(X_0,\ldots,X_{n-1},Y) = \sum_{j=0}^{n-1} G_j(X_0,\ldots,X_{n-1})Y^j + Y^n.\]
Letting $G = (G_0,\ldots, G_{n - 1})$ we have:
\begin{enumerate}
\item If $\bar c = (c_0,\ldots,c_{n - 1})\in F^n$ and $c_j \neq 0$ for some then $g(\bar c,Y)$ has no roots in $F$;
\item There is a $\bar d = (d_0,\ldots, d_{ n - 1})\in F^n$ such that $d_j\neq 0$ for some $j$ and $J_G(\bar d)\neq 0$.
\end{enumerate}
\end{Lem}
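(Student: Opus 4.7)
The plan is to establish the three claims in sequence, after setting up notation. Let $\beta_i(\bar X) := \sum_{j=0}^{n-1}\alpha_i^j X_j$ for $i=1,\ldots,n$, so that
\[
g(\bar X, Y) = \prod_{i=1}^n (Y - \beta_i(\bar X)) = \sum_{k=0}^n (-1)^k e_k(\beta_1,\ldots,\beta_n)\, Y^{n-k},
\]
where $e_k$ is the $k$-th elementary symmetric polynomial. This identifies $G_j = (-1)^{n-j} e_{n-j}(\beta_1,\ldots,\beta_n)$ and makes clear that the leading term in $Y$ is $Y^n$. To see $g\in F[\bar X, Y]$, I would observe that any automorphism of the Galois closure of $F(\alpha)$ over $F$ permutes the conjugates $\alpha_i$, hence permutes the $\beta_i$, and therefore fixes each $e_k(\beta_1,\ldots,\beta_n)$; so the coefficients of $g$ lie in $F[\bar X]$.

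For claim (1), suppose $y \in F$ satisfies $g(\bar c, y) = 0$ for some $\bar c = (c_0,\ldots,c_{n-1}) \in F^n$. Then $y = \beta_i(\bar c) = c_0 + c_1\alpha_i + \cdots + c_{n-1}\alpha_i^{n-1}$ for some $i$. Since each $\alpha_i$ is a conjugate of $\alpha$, its minimal polynomial over $F$ has degree $n$, so $\{1, \alpha_i, \ldots, \alpha_i^{n-1}\}$ is $F$-linearly independent. The relation $(c_0 - y) + c_1\alpha_i + \cdots + c_{n-1}\alpha_i^{n-1} = 0$ therefore forces $c_1 = \cdots = c_{n-1} = 0$, and so if some $c_j \neq 0$ with $j \geq 1$, no $F$-root exists. (This is evidently the intended reading of ``$c_j \neq 0$ for some''.)

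For claim (2), I would factor $G = \Psi \circ \Phi$, where $\Phi: \bar X \mapsto (\beta_1(\bar X),\ldots,\beta_n(\bar X))$ is $F$-linear with matrix $(\alpha_i^j)_{i,j}$, and $\Psi: \bar\beta \mapsto ((-1)^n e_n(\bar\beta), \ldots, -e_1(\bar\beta))$ is the signed elementary symmetric map. The chain rule gives $J_G(\bar X) = J_\Psi(\Phi(\bar X)) \cdot J_\Phi$. Here $J_\Phi$ is the Vandermonde determinant $\prod_{i<k}(\alpha_k - \alpha_i)$, nonzero because the $\alpha_i$ are distinct. By the classical Jacobian identity for elementary symmetric polynomials, $J_\Psi(\bar\beta) = \pm \prod_{i<j}(\beta_j - \beta_i)$. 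Each factor $\beta_i - \beta_j = \sum_k (\alpha_i^k - \alpha_j^k) X_k$ is a nonzero linear form, as its $X_1$-coefficient $\alpha_i - \alpha_j$ is nonzero, so $J_G \in F[\bar X]$ is a nonzero polynomial. Since the application is to a dp-minimal valued field, $F$ is infinite, so one can pick $\bar d \in F^n \setminus \{0\}$ with $J_G(\bar d) \neq 0$.

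The main obstacle is the Jacobian identity $J_\Psi(\bar\beta) = \pm\prod_{i<j}(\beta_j - \beta_i)$; I would recall its classical proof, observing that both sides are polynomials of degree $\binom{n}{2}$ in the $\beta_i$, that $J_\Psi$ vanishes precisely on the discriminant locus $\{\beta_i = \beta_j\}$, and that $J_\Psi$ is therefore a scalar multiple of the Vandermonde product, with the scalar pinned down by a specialization. Once this is in hand, the rest of the argument is routine symmetric-function bookkeeping plus a single chain-rule computation.
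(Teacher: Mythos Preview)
Your proof is correct. The paper does not actually prove this lemma; it merely records it and cites \cite{MMS}, \cite{G14}, and \cite{mmv}, so your argument supplies what the paper omits. Your reading of claim~(1)---that the nonvanishing index must satisfy $j\geq 1$---is the right one (with $\bar c=(c_0,0,\ldots,0)$ one has $g(\bar c,Y)=(Y-c_0)^n$, which certainly has an $F$-root), and it matches the formulation in the original sources. One minor slip: $\Phi$ is linear over the splitting field of the minimal polynomial of $\alpha$, not over $F$, since its matrix entries $\alpha_i^j$ need not lie in $F$; but this is harmless for the chain-rule computation, and $J_G\in F[\bar X]$ follows anyway because $G$ itself has coefficients in $F$. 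Your appeal to the infinitude of $F$ in claim~(2) is indeed required and holds in every application in the paper, since a nontrivially valued field is infinite; note also that $J_G(\bar d)\neq 0$ already forces some $d_j\neq 0$ with $j\geq 1$, since $\bar d=(d_0,0,\ldots,0)$ makes all $\beta_i(\bar d)$ coincide.
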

In the proposition below $\Gamma$ is the value group of $(F,v)$.
In the ordered case this proposition is a consequence of Proposition 3.6 of \cite{G14} which shows that a dp-minimal ordered field is closed in its real closure.
\begin{Prop} \label{propC}
Suppose that $(F,v)$ is dp-minimal and let $(F^h, v^h)$ be the henselization of $(F,v)$. Let $\alpha \in F^h$ such that for any $\gamma \in \Gamma$ there is 
some $\beta\in F$ such that $v^h(\beta-\alpha)\geq \gamma$. Then $\alpha\in F$.
\end{Prop}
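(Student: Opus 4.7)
The plan is to argue by contradiction, using the polynomial trick of Lemma~\ref{lem:ply} together with Proposition~\ref{prop_pofu} and the approximability hypothesis. Suppose toward a contradiction that $\alpha \in F^h \setminus F$; then $\alpha$ is algebraic over $F$ of some degree $n \ge 2$. Apply Lemma~\ref{lem:ply} to obtain
\[
g(\bar X,Y) \;=\; Y^n + \sum_{j=0}^{n-1} G_j(\bar X)\,Y^j \;\in\; F[\bar X, Y]
\]
and $\bar d \in F^n$ with $J_G(\bar d) \ne 0$. A direct computation shows that at any point of the form $(d_0, 0, \dots, 0)$ all columns of the Jacobian $J_G$ are scalar multiples of the single vector $\bigl(\binom{n-1}{j}(-d_0)^{n-1-j}\bigr)_j$, so $J_G$ vanishes there; hence the $\bar d$ supplied by Lemma~\ref{lem:ply}(2) automatically satisfies $d_j \ne 0$ for some $j \ge 1$, and by $F$-linear independence of $1, \alpha, \dots, \alpha^{n-1}$ we have $\sum_j \alpha^j d_j \notin F$.

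Now apply Proposition~\ref{prop_pofu}: every open polydisc $U \subseteq F^n$ around $\bar d$ has $G(U)$ of nonempty interior. Shrink $U$ so that, by the ultrametric inequality, every $\bar c \in U$ has $c_j \ne 0$ whenever $d_j \ne 0$. Pick a nonempty open polydisc $W \subseteq G(U)$, take any $\bar v_0 \in W$, and choose a preimage $\bar c_0 \in U$ with $G(\bar c_0) = \bar v_0$. Then $\beta_0 := \sum_{j=0}^{n-1} \alpha^j (c_0)_j \in F^h$ is a root of $g(\bar c_0, Y)$, and $\beta_0 \notin F$ because $(c_0)_j \ne 0$ for some $j \ge 1$. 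The approximation hypothesis on $\alpha$ transfers to $\beta_0$ by elementary computation with $v^h$, so we can find $\beta^* \in F$ with $v^h(\beta^* - \beta_0)$ as large as we like; correspondingly $g(\bar c_0, \beta^*)$ can be made arbitrarily small, and the perturbed point
\[
\bar v^* \;:=\; \bigl((v_0)_0 - g(\bar c_0, \beta^*),\, (v_0)_1, \dots, (v_0)_{n-1}\bigr)
\]
still lies in $W$, while by construction the polynomial $Y^n + \sum_j (v^*)_j Y^j$ has $\beta^* \in F$ as a root.

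Since $\bar v^* \in W \subseteq G(U)$, write $\bar v^* = G(\bar c^*)$ for some $\bar c^* \in U$; the polynomial $g(\bar c^*, Y)$ then has an $F$-root, so Lemma~\ref{lem:ply}(1) forces $(c^*)_j = 0$ for every $j \ge 1$. This contradicts the choice of $U$, since $\bar c^* \in U$ forces $(c^*)_j \ne 0$ for at least one $j \ge 1$. The only delicate step is the bookkeeping around the perturbation: one must choose $\beta^*$ close enough to $\beta_0$ in the $v^h$-metric that $\bar v^*$ stays inside the open set $W$. This is a routine estimate once one uses that $\beta_0$ is a root of $g(\bar c_0, Y)$ to bound $v^h(g(\bar c_0, \beta^*))$ from below in terms of $v^h(\beta^* - \beta_0)$.
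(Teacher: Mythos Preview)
Your proof is correct and follows essentially the same approach as the paper's: invoke Lemma~\ref{lem:ply} for $g$ and $G$, use Proposition~\ref{prop_pofu} to obtain an open image $G(U)$, approximate the $F^h$-root by an element $\beta^*\in F$, and perturb one coefficient to land back in $G(U)$ with an $F$-root, contradicting Lemma~\ref{lem:ply}(1). The only cosmetic difference is that you perturb the constant coefficient, whereas the paper perturbs the $Y^{n-1}$-coefficient via the auxiliary function $f(y)=-(y^n+\sum_{j<n-1}e_jy^j)/y^{n-1}$; your Jacobian observation that $d_j\ne 0$ for some $j\ge 1$ is correct but not actually needed for the contradiction.
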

\begin{proof}
The proof is essentially the same as those of \cite[5.4]{MMS} and
\cite[3.6]{G14}. We give slightly less details.
We suppose that $\alpha$ has degree $n$ over $F$ and let $g$ and $G$ be as in Lemma~\ref{lem:ply}.
Let $\bar d$ be as in (2) above.
By Lemma \ref{lem_homeo}, there is an open set $U\subseteq F^n$ containing $\bar d$ such that the restriction of $G$ to $U$ is injective.
 By Proposition \ref{prop_pofu}, $G(U)$ has non-empty interior. 
As $J_G$ is continuous we may assume, after shrinking $U$ if necessary, that $J_{G}$ is nonzero on $U$.
In the same manner we may suppose that for all $(x_0,\ldots, x_{n - 1}) \in U$ there is a $j$ such that $x_j \neq 0$.
After changing the point $\bar d$ if necessary we may also assume that $\bar e :=  G(\bar d)$ lies in the interior of $G(U)$.
We define a continuous function $f: F^h\setminus \{0\} \to F^h$ by
\[ f(y) := - \left. \left ( y^n + \sum_{j=0}^{n-2} e_{j} y^j \right ) ~\right /~ y^{n-1}.\]
Thus for every $y\neq 0$ we have:
\[ y^n + f(y) y^{n-1} + \sum_{j=0}^{n-2} e_j y^j = 0.\]
We define 
$$h(x) :=\sum_{j=0}^{n-1} d_j x^j.$$
Then $h(\alpha)$ is a zero of $g(\bar d,Y)$, so $h(\alpha) \neq 0$ as $g(\bar d, y)$ has no roots in $F$.
As $h(\alpha)$ is a zero of $g(\bar d, y)$ we also have $f(h(\alpha))=e_{n-1}$. 
If $\beta \in F$ is sufficiently close to $\alpha$ then $h(\beta)\neq 0$ and 
 $$(e_0,\dots,e_{n-2}, (f \circ h)(\beta)) \in V.$$
 There is thus a $\bar c\in U$ with 
 $$G(\bar c) = (e_0,\ldots, e_{n - 2}, (f \circ h)(\beta)).$$
 Now by our choice of $U$, there is a $j$ such that $c_j \neq 0$ and so $g(\bar c,Y)$ has no root in $F$. On the other hand, $h(\beta)$ is a root of $g(\bar c,Y)$. Contradiction.
\end{proof}

\begin{Prop} \label{prop2}
If $(F,v)$ is dp-minimal then $v$ is henselian.
\end{Prop}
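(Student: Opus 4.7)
The plan is to apply Proposition~\ref{propC} directly, exploiting the fact that the henselization $(F^h,v^h)$ is always an immediate extension of $(F,v)$ and that $F$ is dense in $F^h$ with respect to the valuation topology. The density of $F$ in $F^h$ is a standard fact in valuation theory: since $F^h$ may be realized as the relative algebraic closure of $F$ inside the completion $\widehat{F}$, every element of $F^h$ is a limit of a sequence from $F$.

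Concretely, I would proceed as follows. Let $\alpha \in F^h$ be arbitrary. Because $(F^h,v^h)/(F,v)$ is immediate, the value group of $F^h$ equals $\Gamma$, and by density, for every $\gamma \in \Gamma$ there exists $\beta \in F$ with $v^h(\beta - \alpha) \geq \gamma$. Thus $\alpha$ satisfies the hypothesis of Proposition~\ref{propC}, so $\alpha \in F$. Since $\alpha$ was arbitrary, $F^h = F$, which is exactly the assertion that $(F,v)$ is henselian.

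There is essentially no obstacle beyond invoking the two standard facts about henselizations (immediate extension and density), both of which are purely valuation-theoretic and do not depend on dp-minimality. All the model-theoretic work has already been absorbed into Proposition~\ref{propC}, so the proof of Proposition~\ref{prop2} reduces to a one-line deduction.
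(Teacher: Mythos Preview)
Your argument has a genuine gap: the assertion that $F$ is dense in $F^h$ is \emph{false} for valuations of rank greater than one, and the justification you give (that $F^h$ is the relative algebraic closure of $F$ in the completion $\widehat F$) is only valid in the rank-one case.

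Here is a concrete obstruction. Let $K=k(x,y)$ with $\operatorname{char}(k)\neq 2$, and let $v$ be the rank-two valuation obtained by composing the $y$-adic place $K\to k(x)$ with the $x$-adic place $k(x)\to k$, so that $\Gamma=\mathbb{Z}\times\mathbb{Z}$ ordered lexicographically. Let $w$ be the coarsening of $v$ with value group $\mathbb{Z}$ (the first coordinate), so that $Kw=k(x)$ carries the $x$-adic valuation $\bar v$. The residue field $K^h w^h$ is the henselization of $(k(x),\bar v)$ and therefore contains $\sqrt{1+x}\notin k(x)$. Lift this element to some $\alpha\in\mathcal O_{w^h}\subseteq K^h$. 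If there were $\beta\in K$ with $v^h(\alpha-\beta)\geq(1,0)$, then $w^h(\alpha-\beta)\geq 1$, so $\alpha$ and $\beta$ would have the same $w$-residue; but the residue of $\beta$ lies in $k(x)$ while that of $\alpha$ is $\sqrt{1+x}\notin k(x)$. Hence no element of $K$ approximates $\alpha$ to within $(1,0)$, and $K$ is not dense in $K^h$.

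This is exactly the difficulty the paper's proof is designed to overcome. For the specific root $\alpha\in F^h$ produced from a failure of Hensel's lemma, the set $S=\{v^h(b-\alpha):b\in F,\ v^h(b-\alpha)>0\}$ is shown to be cofinal only in the convex subgroup $\Delta$ it generates, not in all of $\Gamma$. One then passes to the (externally definable, hence still dp-minimal) coarsening $w$ with value group $\Gamma/\Delta$; in the residue field $Fw$ the image of $\alpha$ \emph{is} arbitrarily well approximated by $Fw$, so Proposition~\ref{propC} applies there and yields the contradiction. Your shortcut bypasses this coarsening step without justification, and in general it cannot be bypassed.
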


\begin{proof}
Suppose that $(F,v)$ is dp-minimal.
Let $\mathcal{O}_v$ be the valuation ring and $\Gamma$ be the value group. 
This proof follows the proofs of \cite[5.12]{MMS} and \cite[3.12]{G14}:
We suppose towards a contradiction that $v$ is not henselian. 
There is a polynomial
$$p(X)=X^n + aX^{n-1} + \sum_{i=0}^{n-2} c_iX^i\in \mathcal{O}_v[X]$$
such that $v(a)=0$, $v(c_i)>0$ for all $i$ and such that $p$ has no root in $F$.
Let $\alpha \in F^h$ be such that $p(\alpha)=0$, $v(\alpha - a) > 0$ and $v(p'(\alpha))=0$.
Consider the subset 
$$S:=\{v^h(b-\alpha)\in \Gamma\,\mid\, b \in F,\,v^h(b-\alpha)>0\}$$
of $\Gamma$, and let $\Delta$ be the convex subgroup of $\Gamma$ generated by $S$.\\
\emph{Claim:} $S$ is cofinal in $\Delta$.\\
\emph{Proof of claim}: Identical with that of \cite[Claim 5.12.1]{MMS}.\\
Let $w$ be the coarsening of $v$ with value group $\Gamma/\Delta$ and let $w^h$ be the corresponding coarsening of $v^h$.
As $\Delta$ is externally definable, $w$ is definable in the Shelah expansion of $(F,v)$.
Then $(F,v,w)$ is dp-minimal and so the residue field $Fw$ of $w$ is also dp-minimal.
We let $\bar{v}$ be the non-trivial valuation induced on $Fw$ by $v$.\\
\emph{Claim:} There is a $\beta \in F$ such that $p(\beta)\in \mathcal{M}_w$.\\
\emph{Proof of claim:} 
By the definition of $\Delta$, the residue $\alpha w^h$ is approximated arbitrarily well 
in the residue field $Fw$ (with respect to the valuation $\bar{v}$). 
We show that 
$(F^hw^h,\bar{v}^h)$ is a henselization of $(Fw,\bar{v})$: Since $w$ is
a coarsening of $v$, $(F^h,v^h)$ contains the henselization of $(F,w)$. If 
these henselization coincide, $(Fw, \bar{v})$ is henselian by 
\cite[4.1.4]{EP05}. If the extension is proper, \cite[4.1.4]{EP05} implies once
more 
that $(F^hw^h,\bar{v}^h)$ is a henselization of $(Fw,\bar{v})$, as desired.  
Thus Proposition \ref{propC} gives $\alpha w^h \in Fw$.
Take some $\beta \in F$ with the same residue (with respect to $w$)
as $\alpha$. In particular, $\beta$ is a root of the polynomial $\bar{p}(x)$ (that is $p(x)$ considered in $Kw$),
i.e., we have $p(\beta)\in \mathcal{M}_w$. 
This proves the claim.

We declare $$J:=\{b \in F\,\mid\,v(b-a)>0\}.$$
Then, as $\beta-\alpha \in \mathcal{M}_w \subseteq \mathcal{M}_v$ holds, we have $\beta \in J$.\\
\emph{Claim:} For all $b \in J$, we have $v(b-\alpha)=v(p(b))$.\\
\emph{Proof of claim:} This is shown in the first part of the proof of \cite[Claim 5.12.2]{MMS}.\\
However, by the definition of $\Delta$, $w(p(b))=0$ for any $b \in J$.  This contradicts $p(\beta)\in \mathcal{M}_w$, 
and hence finishes the proof.
\end{proof}

\section{Dp-Minimal Ordered Abelian Groups}\label{section:dpmingroup}
\textit{In this section $(\Gamma,+,\leq)$ is an $\aleph_1$-saturated ordered abelian group with no additional structure}.
In this section we describe dp-minimal ordered abelian groups without additional structure.
Let $M$ be a first order structure expanding a linear order in a language $L$ and suppose that $M$ is $|L|^+$-saturated.
Then $M$ is \textbf{weakly quasi-o-minimal} if every definable subset of $M$ is a boolean combination of convex sets and $\emptyset$-definable sets.
This notion was introduced in \cite{kuda}.
We say that $\Gamma$ is \textbf{non-singular} if $\Gamma/p\Gamma$ is finite for all primes $p$.
Proposition 5.27 of \cite{ADHMS2} implies that a nonsingular torsion free abelian group without additional structure is dp-minimal.

\begin{Prop}\label{dpmingroup}
The following are equivalent:
\begin{enumerate}
\item $(\Gamma, +, \leq)$ is non-singular.
\item $(\Gamma,+,\leq)$ is dp-minimal.
\item There is a definitional expansion of $(\Gamma,+, \leq)$ by countably many formulas which is weakly quasi-o-minimal.
\end{enumerate}
\end{Prop}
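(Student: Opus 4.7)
The plan is to prove the cycle $(3)\Rightarrow(2)\Rightarrow(1)\Rightarrow(3)$. The implication $(3)\Rightarrow(2)$ is soft: a weakly quasi-o-minimal structure is dp-minimal, since every definable subset of the home sort is a boolean combination of finitely many convex sets and $\emptyset$-definable sets, which by a direct counting argument on indiscernible sequences rules out an ict-pattern of length $2$. Dp-minimality is preserved under reducts, so $(\Gamma,+,\leq)$ itself is then dp-minimal.

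For $(2)\Rightarrow(1)$ I would argue by contrapositive. Suppose $\Gamma/p\Gamma$ is infinite for some prime $p$, and pick $(a_i:i<\omega)$ in pairwise distinct cosets of $p\Gamma$. The formulas $\phi(x;a_i)\equiv \exists y\,(x-a_i=py)$ are pairwise inconsistent. The task is then to construct a second row of $2$-inconsistent formulas $\psi(x;c_j)$---built from intervals, suitably scaled so that each relevant interval meets each coset---such that $\phi(x;a_i)\wedge\psi(x;c_j)$ is consistent for all $i,j$. This gives an inp-pattern of size $2$. Since ordered abelian groups are NIP (by the Gurevich-Schmitt analysis), inp-minimality is equivalent to dp-minimality, and we conclude that $\Gamma$ is not dp-minimal.

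The heart of the argument is $(1)\Rightarrow(3)$, and it rests on Gurevich-Schmitt quantifier elimination. In its standard form this expresses every $L$-formula as a boolean combination of formulas involving the order, divisibility predicates, and fragments of a many-sorted auxiliary structure indexed by primes $p$ and by definable convex subgroups of $\Gamma$. Under the non-singularity hypothesis, every $\Gamma/p\Gamma$ is finite, and one checks that this collapses all the auxiliary sorts to a countable family of $\emptyset$-definable predicates on $\Gamma$ (congruence modulo $p\Gamma$, together with the analogous predicates at each definable convex subgroup and each prime). Adjoining these predicates to $(\Gamma,+,\leq)$ gives a definitional expansion in which every definable subset of $\Gamma$ is a boolean combination of convex sets and $\emptyset$-definable sets, which is precisely weak quasi-o-minimality.

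The main obstacle is the book-keeping in $(1)\Rightarrow(3)$: one must unpack the Gurevich-Schmitt elimination carefully enough to isolate the correct countable list of $\emptyset$-definable predicates and to verify that non-singularity really does prevent any ``two-dimensional'' auxiliary data from surviving in the QE. A secondary technicality is the calibration of the intervals in $(2)\Rightarrow(1)$, which must be chosen so that every coset of $p\Gamma$ meets every interval of the second row; $\aleph_1$-saturation of $\Gamma$ is what makes this possible.
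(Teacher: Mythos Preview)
Your plan is correct and follows essentially the same route as the paper. Two small remarks. For $(2)\Rightarrow(1)$ the paper does not build the inp-pattern by hand: it simply notes that $p\Gamma$ is cofinal and invokes the lemma (from \cite{Simon:dp-min}) that a cofinal subgroup of a dp-minimal ordered group has finite index---but the proof of that lemma is exactly the pattern you describe, so this is a difference in packaging only. For $(1)\Rightarrow(3)$ the paper uses the Cluckers--Halupczok presentation of the QE rather than Gurevich--Schmitt directly, and in its explicit language $L_{\mathrm{short}}$ it also names constants for a countable elementary submodel (to absorb the $k_\alpha$ shifts coming from discrete quotients $\Gamma/\Gamma_\alpha$); you should expect to need something of this sort when you carry out the book-keeping, not just the congruence and convex-subgroup predicates.
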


\begin{proof}
Theorem 6.8 of \cite{ADHMS1} implies that a weakly quasi-o-minimal structure is dp-minimal so (3) implies (2).
We show that (2) implies (1).
Suppose that $(\Gamma,+,\leq)$ is dp-minimal.
The subgroup $p\Gamma$ is cofinal in $\Gamma$ for any prime $p$. 
A cofinal subgroup of a dp-minimal ordered group has finite index, see \cite[Lemma 3.2]{Simon:dp-min}.
Thus $\Gamma$ is non-singular.

It remains to show that (1) implies (2).
Suppose that $\Gamma$ is non-singular.
We apply the quantifier elimination for ordered abelian groups given in \cite{Immi}.
We use the notation of that paper.
We let $L_{qe}$ be the language described in \cite{Immi}.
Given an abelian group $G$ and $x,y \in G$ we say that $x \equiv_m y$ if $x - y \in mG$.

For $a \in \Gamma$ and prime $p$ we let $H_{a,p}$ be the largest convex subgroup of $\Gamma$ such that $a \notin H_{a,p}+p\Gamma$.
In $L_{qe}$ for each $p$ there is an auxiliary sort $\mathcal S_p = \Gamma/\!\sim$ where $a \sim b$ if and only if $H_{a,p} = H_{b,p}$.
As $H_{a,p}$ only depends on the class of $a$ in $\Gamma/p\Gamma$, $\mathcal S_p$ is finite.
The other auxiliary sorts $\mathcal T_p$ and $\mathcal T_p^+$ parametrize convex subgroups of $\Gamma$ defined as unions or intersections of the $H_{a,p}$, hence they are also finite.
Given an element $\alpha$ of an auxiliary sort we let $\Gamma_\alpha$ be the convex subgroup of $\Gamma$ associated to $\alpha$.
For $k \in \mathbb{Z}$ we let $k_\alpha$ be the $k$th multiple of the minimal positive element of $\Gamma/\Gamma_\alpha$ if $\Gamma/\Gamma_\alpha$ is discrete and set $k_\alpha = 0$ otherwise.
Fix $\alpha$ and let $\pi: \Gamma \rightarrow \Gamma/\Gamma_\alpha$ be the quotient map.
Given $\diamond \in \{=, \leq , \equiv_m\}$ and $a,b \in \Gamma$ we say that $a \diamond_{\alpha} b$ holds if and only if $\pi(a) \diamond \pi(b)$ holds in $\Gamma/\Gamma_\alpha$.
For each $\alpha$ and $m,m' \in \mathbb{N}$ $L_{qe}$ also has a binary relation denoted by $\equiv_{m,\alpha}^{[m']}$.
We do not define this relation here, as for our purposes it suffices to note that the truth value of $a \equiv_{m,\alpha}^{[m']} b$ depends only on the classes of $a$ and $b$ in $\Gamma/m\Gamma$.
As the auxiliary sorts are finite it follows from the main theorem in \cite{Immi} that every definable subset of $\Gamma^k$ is a boolean combination of sets of the form 
$$ \{ \bar x \in \Gamma^k : t(\bar x) \diamond_\alpha t'(\bar x) + k_\alpha\}, $$
for $\mathbb{Z}$-linear functions $t,t'$, $\alpha$ from an auxiliary sort and $\diamond \in \{=,\leq, \equiv_{m}, \equiv_{m}^{[m']}\}$.
If $\diamond$ is $\equiv_{m}^{[m']}$ then $k_\alpha = 0$.
We claim that $(\Gamma, + \leq)$ admits quantifier elimination in the language $L_{short}$ containing:

$\bullet$ the constant 0, the symbols $+$ and $-$ and the order relation $\leq$;

$\bullet$ for each $n$ and each class $\bar a\in \Gamma/n\Gamma$, a unary predicate $U_{n,\bar a}(x)$ naming the preimage of $\bar a$ in $\Gamma$;

$\bullet$ unary predicates naming each subgroup $H_{a,p}$;

$\bullet$ constants naming a countable submodel $\Gamma_0$.

Having named a countable model, we can consider that the auxiliary sorts are in our structure, by identifying each one with a finite set of constants which projects onto it. We do not need to worry about the structure on those sorts since they are finite. Consider a $2$-ary relation $x_1 \diamond_\alpha x_2 + k_\alpha$ where $\diamond \in \{=,<,\equiv_m\}$, $k\in \mathbb Z$, $m\in \mathbb N$, $\alpha$ from an auxiliary sort. If the symbol is equality then this is equivalent to $x_2-x_1+c \in H_{a,n}$ for an appropriate $a,n$ and constant $c \in \Gamma_0$ projecting onto $k_\alpha$. If the symbol is $<$ then we can rewrite it as 
$$[x_1 < x_2 +c] \wedge [x_2-x_1+c \notin H_{a,n}].$$
 Finally, if the symbol is a congruence relation, then its truth value depends only on the images of $x_1$ and $x_2$ in $\Gamma/m\Gamma$, hence the formula is equivalent to a boolean combination of atomic formulas $U_{m,\bar a}(x_1)$ and $U_{m,\bar a}(x_2)$.
The only relations left in $L_{qe}$ are of the form $x\equiv_{m,\alpha}^{[m']} y$, but again their truth value depends only on the image of $x$ and $y$ in $\Gamma/m\Gamma$, so they can be replaced by $L_{short}$ quantifier-free formulas.

Weak quasi-o-minimality follows easily from quantifier elimination in $L_{short}$: an inequality of terms $t(x) \leq t'(x)$ defines a convex set, any atomic formula $t(x) \in H_{a,p}$ for $t$ a term defines a convex set; an atomic formula of the form $U_{m,\bar a}(t(x))$ defines a $\emptyset$-definable set.
\end{proof}

\section{dp-minimal ordered fields}
\textit{In this section $F$ is an ordered field with no additional structure.}
We make use of the following:

\begin{Prop}[\cite{CS:inp-min}]\label{dpminfield}
Let $(K, v)$ be a henselian valued field of equicharacteristic zero.
Then $(K,v)$ is dp-minimal if and only if the residue field and value group of $(K,v)$ are dp-minimal.
\end{Prop}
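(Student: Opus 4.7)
The plan is to invoke the Ax-Kochen-Ershov principle for henselian valued fields of equicharacteristic zero. After passing to a three-sorted Pas-style language with an angular component map $\mathrm{ac}$, the theory eliminates field quantifiers, and both the residue field $k$ and the value group $\Gamma$ are stably embedded in $(K,v)$ and mutually orthogonal, each carrying only its own induced structure (pure field, pure ordered abelian group). This is the engine that lets one translate questions about $(K,v)$ into questions about $k$ and $\Gamma$.

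The left-to-right direction is essentially immediate from interpretability. The value group $\Gamma$ is interpretable in $(K,v)$ as $K^\times / \mathcal{O}_v^\times$, and the residue field $k$ as $\mathcal{O}_v/\mathcal{M}_v$. Since a definable surjection cannot increase dp-rank, both $\Gamma$ and $k$ have dp-rank at most one; in equicharacteristic zero they are infinite (or trivial, in which case the statement is vacuous), hence dp-minimal.

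For the converse, suppose towards a contradiction that $\phi(x;\bar y)$ and $\psi(x;\bar z)$ together with sequences $(\bar b_i)$ and $(\bar c_j)$ witness failure of dp-minimality of $(K,v)$ on the home sort $K$. By quantifier elimination in the Pas language, each instance of $\phi$ or $\psi$ is a boolean combination of conditions on $v(p(x,\bar b_i))$ and $\mathrm{ac}(p(x,\bar b_i))$ for finitely many polynomials $p$. After an application of Ramsey one may assume the two sequences are mutually indiscernible. Stable embeddedness and orthogonality of $k$ and $\Gamma$ then force each column of the pattern to be equivalent to conditions concentrated either in $\Gamma$ or in $k$, and allow any column mixing the two to be split accordingly. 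This transfers a depth-two ict-pattern on $K$ into a depth-two ict-pattern inside $\Gamma$ or inside $k$, contradicting dp-minimality of one of them.

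The principal difficulty lies in handling the polynomial dependence on $x$: a condition of the form $v(p(x,\bar b_i)) = \gamma$ does not immediately translate into a condition on a fixed tuple in $\Gamma$, since $p(x,\bar b_i)$ still involves the variable $x$. One has to use henselianity and the valuation topology to factor $p$ over a henselization, identify the generic behaviour of $v(p(x,\cdot))$ on the set of $x$ realizing the pattern, and only then push the pattern down to $\Gamma$; similarly for the angular component. This factoring step is where equicharacteristic zero and henselianity are essential, since they make the Pas-style quantifier elimination and the orthogonality of $k$ and $\Gamma$ available.
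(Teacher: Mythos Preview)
The paper does not prove this proposition; it is quoted from \cite{CS:inp-min} without argument, so there is no in-paper proof to compare against. Your overall strategy---pass to the Denef--Pas language, use elimination of field quantifiers, and exploit stable embeddedness and orthogonality of $k$ and $\Gamma$---is indeed the framework Chernikov and Simon use, and your forward direction via interpretability is correct.

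The converse, however, has a genuine gap. The claim that ``stable embeddedness and orthogonality of $k$ and $\Gamma$ force each column of the pattern to be equivalent to conditions concentrated either in $\Gamma$ or in $k$'' is not justified and, as stated, not true. Orthogonality is a statement about the \emph{sorts} $k$ and $\Gamma$; it says nothing directly about a formula $\phi(x;\bar b)$ with $x$ ranging over the field sort $K$. A condition like $v(p(x,\bar b))>\gamma \wedge \mathrm{ac}(q(x,\bar b))=\xi$ does not decompose into a $\Gamma$-condition and a $k$-condition on any fixed tuple, since $p(x,\bar b)$ and $q(x,\bar b)$ still depend on $x$. Your final paragraph acknowledges exactly this, but ``factor $p$ over a henselization'' and ``identify the generic behaviour'' are not an argument; they name the difficulty without resolving it. In \cite{CS:inp-min} the real work is a structural analysis of one-variable definable subsets of $K$ (a Swiss-cheese/cell-type reduction) which puts the formulas in a normal form from which an inp-pattern of depth $2$ in $K$ can actually be pushed down to one in $k$ or in $\Gamma$. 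That reduction is the substance of the proof and is absent from your sketch.

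A smaller point you also need to address: passing to a language with an angular component map is not free. You should note that one may move to a sufficiently saturated elementary extension admitting an $\mathrm{ac}$-map, and that in the Denef--Pas expansion the induced structures on $k$ and $\Gamma$ remain the pure field and pure ordered abelian group, so their assumed dp-minimality is preserved; dp-minimality of $(K,v)$ then follows by taking the reduct.
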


Given a field $k$ and an ordered abelian group $\Gamma$, $k((t^\Gamma))$ is the field of Hahn series with coefficients in $k$ and exponents in $\Gamma$.
By the Ax-Kochen/Ersov Theorem (\cite[4.6.4]{PD}) a field $K$ 
admitting a henselian valuation with residue characteristic zero, residue field $k$ and value group $\Gamma$ is elementarily equivalent to $k((t^\Gamma))$.

\begin{Thm}\label{orderedf}
The ordered field $F$ is dp-minimal if and only if $F\equiv\mathbb{R}((t^\Gamma))$ for some non-singular ordered abelian group $\Gamma$.
\end{Thm}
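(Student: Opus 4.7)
I would prove the equivalence in two directions, drawing on Proposition~\ref{dpminfield}, Proposition~\ref{dpmingroup}, and the structural results about henselian valuations on dp-minimal fields established in the preceding sections, together with the Ax--Kochen/Ershov theorem cited just before the statement.

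For the direction from $F\equiv \mathbb{R}((t^\Gamma))$ with $\Gamma$ non-singular to dp-minimality, I work with $K := \mathbb{R}((t^\Gamma))$ equipped with its natural henselian valuation $v$. The residue field $\mathbb{R}$ is o-minimal, hence dp-minimal; Proposition~\ref{dpmingroup} gives that $\Gamma$ is dp-minimal; and so Proposition~\ref{dpminfield} yields that $(K,v)$ is dp-minimal as a valued field. To transfer dp-minimality to the ordered field reduct, observe that non-singularity of $\Gamma$ makes $\Gamma/2\Gamma$ finite, and consequently $K^\times/(K^\times)^2$ finite. Taking $t^{\bar\gamma}\in K$ to be a lift of each class $\bar\gamma\in \Gamma/2\Gamma$, and using Hensel's lemma together with the fact that $\mathbb{R}$ is real closed to identify the squares in $K$, one checks that
\[ \{x\in K: x>0\} = \bigsqcup_{\bar\gamma\in \Gamma/2\Gamma} t^{\bar\gamma}\cdot(K^\times)^2. \]
Therefore the positive cone is first-order definable in the pure field $(K,+,\cdot)$ using only the finitely many parameters $\{t^{\bar\gamma}\}$. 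Since $(K,+,\cdot)$ is a reduct of the dp-minimal structure $(K,v)$ and naming finitely many constants preserves dp-minimality, $(K,\leq)$ is dp-minimal, and so is $F$.

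For the forward direction, let $F$ be a dp-minimal ordered field, and pass to an $\aleph_1$-saturated elementary extension $F^*\succeq F$, which remains dp-minimal. By the results of the preceding sections --- namely that a sufficiently saturated dp-minimal ordered field admits a henselian valuation with residue field $\mathbb{R}$ and non-singular value group --- there is a henselian valuation $v$ on $F^*$ with residue field $\mathbb{R}$ and non-singular value group $\Gamma$. The cited Ax--Kochen/Ershov theorem then gives $F^*\equiv\mathbb{R}((t^\Gamma))$ as pure fields. To upgrade this to elementary equivalence as ordered fields, I would invoke an AKE-type theorem for ordered henselian valued fields of equicharacteristic zero with formally real residue field: both $(F^*,\leq,v)$ and $(\mathbb{R}((t^\Gamma)),\leq,v_{\mathrm{nat}})$ are henselian of equicharacteristic zero with uniquely-ordered residue field $\mathbb{R}$ and value group $\Gamma$, and in each the order is the unique one compatible with the valuation extending the order on $\mathbb{R}$; hence they are elementarily equivalent as ordered valued fields, and reducing yields the desired equivalence.

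The principal obstacle is this upgrade from the field-level AKE to its ordered analog. The backward direction already supplies the key tool: the positive cone is encoded uniformly, across all henselian valued fields with residue field $\mathbb{R}$ and finite $\Gamma/2\Gamma$, as a disjoint union of finitely many cosets of $(K^\times)^2$ with first-order definable parameters, and this encoding transfers across field-level elementary equivalence. So the ordered AKE step reduces to this finite-parameter bookkeeping, which is concrete enough to carry out explicitly if a direct reference is not at hand.
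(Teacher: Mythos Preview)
Your overall architecture matches the paper's: for the backward direction combine Propositions~\ref{dpmingroup} and~\ref{dpminfield}, and for the forward direction pass to an $\aleph_1$-saturated model, obtain a henselian valuation with residue field $\mathbb{R}$ and non-singular value group, then invoke Ax--Kochen/Ershov together with its ordered analogue. Your backward direction is in fact more explicit than the paper's terse one, since you spell out why the order on $\mathbb{R}((t^\Gamma))$ is definable from the field structure.

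There is, however, a genuine gap in the forward direction. You invoke ``the results of the preceding sections --- namely that a sufficiently saturated dp-minimal ordered field admits a henselian valuation with residue field $\mathbb{R}$ and non-singular value group'' as if this were already established. It is not: that sentence in the introduction is a forward reference to precisely what is being proven here. The preceding sections show only that a dp-minimal \emph{valued} field is henselian (Proposition~\ref{prop2}); they do not manufacture a valuation on a dp-minimal \emph{ordered} field. The step you are missing, and which the paper carries out, is: let $\mathcal{O}$ be the convex hull of $\mathbb{Z}$ in the saturated model; this is a convex subring, hence externally definable, so $(F,\mathcal{O})$ is dp-minimal by Proposition~\ref{prop_shelah}; then Proposition~\ref{prop2} gives henselianity, $\aleph_1$-saturation forces the archimedean residue field to be all of $\mathbb{R}$, and Proposition~\ref{dpmingroup} applied to the (interpretable, hence dp-minimal) value group gives non-singularity. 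Without this paragraph your argument is circular.

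A smaller point: in your ordered-AKE discussion you assert that the order on each side is ``the unique one compatible with the valuation extending the order on $\mathbb{R}$''. This is false --- there are $|\Gamma/2\Gamma|$ such orderings on $\mathbb{R}((t^\Gamma))$, one for each character $\Gamma\to\{\pm 1\}$. They are all conjugate under field automorphisms $t^\gamma\mapsto \epsilon_\gamma t^\gamma$, which is what would ultimately make your square-coset fallback work, but the uniqueness claim as stated is wrong. The paper handles this step by simply citing \cite[Corollary~3.6]{DelFar}.
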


\begin{proof}
Suppose that $F\equiv\mathbb{R}((t^\Gamma))$ for a non-singular ordered abelian group $\Gamma$.
Then Proposition~\ref{dpmingroup} and Proposition~\ref{dpminfield} together show that $F$ is dp-minimal.
Suppose that $F$ is dp-minimal.
We suppose without loss of generality that $F$ is $\aleph_1$-saturated.
Let $\mathcal O$ be the convex hull of $\mathbb{Z}$ in $F$.
As $\mathcal O$ is convex, it is externally definable.
Thus $(F, \mathcal O)$ is dp-minimal.
As $\mathcal O$ is a valuation ring, Proposition~\ref{prop2} implies that the associated valuation is henselian.
Let $\Gamma$ be the value group of this valuation.
Then $\Gamma$ is dp-minimal hence non-singular.
The residue field is a subfield of $\mathbb{R}$.
It follows from $\aleph_1$-saturation that the residue field is equal to $\mathbb{R}$.
The Ax-Kochen/Ersov Theorem now implies $F\equiv\mathbb{R}((t^\Gamma))$ (as valued fields).
By \cite[Corollary 3.6]{DelFar}, we also get $F\equiv\mathbb{R}((t^\Gamma))$ (as ordered fields).
\end{proof}

\begin{Cor}
Suppose that $F$ is dp-minimal.
Then $F$ has small absolute Galois group.
In particular, $F$
satisfies the Galois-theoretic assumption in the
Shelah-Hasson Conjecture, i.e.~for all $L \supseteq F$ finite and for all
$n \in \mathbb{N}$, the index $[L^\times:(L^\times)^n]$ is finite.
\end{Cor}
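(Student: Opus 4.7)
The plan is to reduce the corollary to a valuation-theoretic computation via Theorem~\ref{orderedf}. That theorem gives $F \equiv \mathbb{R}((t^\Gamma))$ for some non-singular ordered abelian group $\Gamma$, and since the assertion ``$[L^\times:(L^\times)^n] \leq k$ for $L$ generated by a root of a given polynomial'' is first-order in the coefficients of that polynomial, finiteness of every such index is preserved under elementary equivalence. Thus we may assume $F$ carries a henselian valuation $v$ of equicharacteristic zero whose residue field $Fv$ is real closed and whose value group $\Gamma$ is non-singular.

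Given a finite extension $L/F$, the valuation $v$ extends uniquely to a henselian $v_L$ on $L$. The residue field $Lv_L$ is a finite extension of a real closed field, hence real closed or algebraically closed, so $|(Lv_L)^\times/((Lv_L)^\times)^n| \leq 2$. The value group $\Gamma_L$ lies between $\Gamma$ and its divisible hull with $[\Gamma_L:\Gamma]$ finite; a short diagram chase, using that $\Gamma_L/(\Gamma + p\Gamma_L)$ is a quotient of $\Gamma_L/\Gamma$ and that $(\Gamma + p\Gamma_L)/p\Gamma_L$ is a quotient of $\Gamma/p\Gamma$, propagates non-singularity from $\Gamma$ to $\Gamma_L$, so $\Gamma_L/n\Gamma_L$ is finite for every $n$. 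Applying the snake lemma to the valuation sequence $1 \to \mathcal{O}_L^\times \to L^\times \to \Gamma_L \to 0$ modulo $n$-th powers yields the exact sequence
$$ \mathcal{O}_L^\times/(\mathcal{O}_L^\times)^n \longrightarrow L^\times/(L^\times)^n \longrightarrow \Gamma_L/n\Gamma_L \longrightarrow 0. $$
Since $\mathrm{char}(Lv_L) = 0$, Hensel applied to $X^n - u$ for $u \in 1+\mathcal{M}_L$ shows $1+\mathcal{M}_L$ is uniquely $n$-divisible, so $\mathcal{O}_L^\times/(\mathcal{O}_L^\times)^n \cong (Lv_L)^\times/((Lv_L)^\times)^n$. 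Both ends are therefore finite, whence $[L^\times:(L^\times)^n] < \infty$.

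Smallness of $G_F$ itself now follows by standard Kummer-theoretic bookkeeping: after passing to the finite extension $F(\mu_n)$, abelian extensions of exponent $n$ correspond to subgroups of $F(\mu_n)^\times/(F(\mu_n)^\times)^n$, which is finite by what we just proved, and any Galois extension of bounded degree is assembled from finitely many such abelian layers, so $G_F$ has only finitely many open subgroups of each finite index. The only step requiring genuine care is the diagram chase establishing non-singularity of $\Gamma_L$; everything else is automatic from residue characteristic zero and the trichotomy for finite extensions of a real closed field.
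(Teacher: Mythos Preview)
Your valuation-theoretic computation establishing $[L^\times:(L^\times)^n] < \infty$ is correct and more self-contained than what the paper does; the paper instead cites the proof of Proposition~5.3 in \cite{FJ15a} for the explicit isomorphism
\[
G_F \;\cong\; \Bigl(\prod_p \mathbb{Z}_p^{r_p}\Bigr) \rtimes \mathbb{Z}/2\mathbb{Z}, \qquad r_p = \dim_{\mathbb{F}_p}\Gamma/p\Gamma < \infty,
\]
from which both smallness and the power-index condition can be read off at once. (A minor remark on your transfer step: ``finiteness'' is not literally first-order, but your argument visibly produces a bound on $[L^\times:(L^\times)^n]$ that is uniform over extensions of a fixed degree --- namely $2\,[L:F]\,|\Gamma/n\Gamma|$ --- so the transfer via elementary equivalence is legitimate.)

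There is, however, a genuine gap in your last paragraph. You deduce smallness of $G_F$ from the power-index condition by asserting that ``any Galois extension of bounded degree is assembled from finitely many such abelian layers''. As written this only handles \emph{solvable} extensions: a Galois extension with non-abelian simple Galois group has no nontrivial abelian subextension, and Kummer theory gives no direct count of such extensions. The fix in the present situation is to observe that $G_F$ is prosolvable. Since $F$ carries a henselian valuation of residue characteristic zero with real closed residue field $Fv$, there is a short exact sequence $1 \to I \to G_F \to G_{Fv} \to 1$ in which the inertia group $I$ is abelian (there is no wild ramification) and $G_{Fv}\cong\mathbb{Z}/2\mathbb{Z}$; hence every finite quotient of $G_F$ is metabelian and your layering argument then goes through. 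This prosolvability is the missing ingredient and should be stated explicitly; once it is, your route and the paper's citation of \cite{FJ15a} amount to the same structural computation viewed from two ends.
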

\begin{proof}
Suppose we have $F \equiv \mathbb{R}((t^\Gamma))$ for a nonsingular $\Gamma$.
For each prime $p$ let $r_p$ be the $\mathbb{F}_p$-dimension 
of $\Gamma/p\Gamma$. The proof of Proposition 5.3 in \cite{FJ15a} shows 
that the absolute Galois group of $F$ is isomorphic to the small group
$$\left( \prod_p \mathbb{Z}_p^{r_p} \right) \rtimes \mathbb{Z}/2\mathbb{Z},$$
where the product is taken over all primes $p$.
\end{proof}

\section{dp-minimal fields with type v topologies}\label{section:typeV}
In this section we show that a dp-minimal field which admits a definable type V topology is either real closed, algebraically closed or admits a non-trivial definable henselian valuation.
We first recall the notion of a type V topological field.

Let $F$ be a topological field.
We say that a subset $A \subseteq F$ is \textbf{bounded} if for every 
neighborhood $U$ of zero there is a neighborhood $V$ of zero such that 
$VA \subseteq U$.
It is clear that subsets of bounded sets are bounded and finite unions of bounded sets are bounded, it is not difficult to show that if $A,B \subseteq F$ are bounded then so are $A + B$ and $AB$.
A subset $A \subseteq F$ is \textbf{bounded away from zero} if there 
is a neighborhood of zero which is disjoint from $A$.
We say that $F$ is \textbf{type V} if any set $A \subseteq F$ is bounded if and only if $A^{-1}$ is bounded away from zero.
If $F$ is a type V topological field and $U$ is a bounded neighborhood of zero then the sets of the form $aU + b$ for $a,b \in F$ form a basis for the topology on $F$.
We say that a first order expansion of a field $K$ admits a definable type V topology if there is type V topology on $\mathcal T$ on $K$ and a definable set $U \subseteq K$ which is open and bounded with respect to $\mathcal T$.

\begin{Lem}\label{external}
Let $F$ be a first order structure expanding a field in a language $L$.
Suppose that $F$ is $|L|^+$-saturated and admits a definable type V topology $\mathcal T$.
Then $F$ admits an externally definable valuation $\mathcal O$.
In particular, if $F$ is dp-minimal then $F$ has a valuation ring $\mathcal O$ such that $(F, \mathcal O)$ is dp-minimal.
\end{Lem}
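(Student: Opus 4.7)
The plan is to work in an $|L|^+$-saturated elementary extension $F^* \succ F$ and use the type V property to extract a nonstandard parameter $t \in F^*$ whose valuation-theoretic position allows the $L$-formula $xt \in U$ to externally define a valuation ring of $F$.

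By the classical characterisation of type V topologies on fields (Kowalsky--D\"urbaum, Prestel--Ziegler), $\mathcal T$ is induced either by a non-trivial valuation $v$ on $F$ or by an archimedean absolute value; the latter is incompatible with $|L|^+$-saturation of $F$, so we fix a valuation $v$ on $F$ whose valuation topology is $\mathcal T$. The task reduces to externally defining $\mathcal O_v$ using only the given datum $U$.

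The definable family $\{cU : c \in F^\times\}$ satisfies $cU \subsetneq U$ precisely when $v(c) > 0$, and positive-valuation scalings of $U$ are linearly ordered by inclusion, so any finite union of such proper scalings is itself a proper subset of $U$. Hence the partial type
\[
\pi(x) \;=\; \{x \in U\}\;\cup\;\{x \notin cU : c \in F,\ cU \subsetneq U\}
\]
is finitely satisfiable in $F$, and $|L|^+$-saturation of $F^*$ yields a realisation $t \in F^*$. The key property of $t$ is that $v^*(t)$ sits in a cut of $v^*(F^*)\setminus v(F)$ just above $\gamma_0 := \inf v(U) \in v(F)$. Setting
\[
\mathcal O \;:=\; \{a \in F : at \in U^*\}
\]
gives an externally definable subset of $F$, witnessed by the $L$-formula $\varphi(x;y) \equiv (xy \in U)$ with parameter $t$. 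A short valuation computation, using the sandwich $\{v \geq \gamma_1\} \subseteq U \subseteq \{v \geq \gamma_0\}$ together with the placement of $v^*(t)$, identifies $\mathcal O$ with $\mathcal O_v$: the inequality $v^*(at) \geq \gamma_0$ translates to $v(a) \geq \gamma_0 - v^*(t)$, an infinitesimally negative threshold equivalent, for $v(a) \in v(F)$, to $v(a) \geq 0$.

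The main difficulty is precisely this last identification $\mathcal O = \mathcal O_v$ when $U$ is not a basic ball; the placement of $v^*(t)$ outside $v(F)$ is essential to keep stray boundary points of $U^*$ from being wrongly included or excluded, which is exactly what the chosen $t$ delivers. For the ``in particular'' clause, suppose $F$ is dp-minimal. Then by Proposition~\ref{prop_shelah} the Shelah expansion $F^{Sh}$ is dp-minimal, and since $\mathcal O$ is externally definable in $F$ it is $\emptyset$-definable in $F^{Sh}$; the reduct $(F, \mathcal O)$ of $F^{Sh}$ therefore inherits dp-minimality.
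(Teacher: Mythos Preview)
Your overall plan---invoke Kowalsky--D\"urbaum/Prestel--Ziegler to reduce to a valuation topology, then externally define $\mathcal O_v$ via a well-chosen nonstandard scaling parameter---is different from the paper's route and is a reasonable idea, but the execution has genuine gaps.

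The central problem is that your structural claims about $U$ are false for a general bounded open definable set. You assert that $cU \subsetneq U$ \emph{precisely} when $v(c) > 0$ and that the positive-valuation scalings of $U$ are linearly ordered by inclusion. Both fail already for $U = B_{\gamma_0}(0) \cup B_{\gamma_1}(a)$ with $v(a) < \gamma_0 < \gamma_1$: for $c$ with $0 < v(c) < \gamma_0 - v(a)$ one has $v(ca - a) = v(a) < \gamma_1$ and $v(ca) = v(c) + v(a) < \gamma_0$, so $ca \notin U$ and hence $cU \not\subseteq U$. These claims are what you use to get finite satisfiability of $\pi$ and to pin down the cut of $v^*(t)$, so the argument does not go through as written. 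The same issue resurfaces at the end: the condition $at \in U^*$ is \emph{not} the valuation inequality $v^*(at) \geq \gamma_0$ when $U$ is not a ball, so your ``short valuation computation'' does not identify $\mathcal O$ with $\mathcal O_v$; for $a \in \mathcal O_v$ with $v(a) = 0$ the point $at$ lands in the annulus where $U^*$ is uncontrolled. (There are smaller issues too: $\pi$ has $|F|$ many formulas, so you need $|F|^+$-saturation of $F^*$, not $|L|^+$; and $\gamma_0 := \inf v(U)$ need not lie in $v(F)$.)

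The paper avoids all of this by never trying to recover a pre-existing valuation. It takes a small elementary substructure $F_0 \prec F$ and sets $\mathcal O$ to be the union of all $F_0$-definable bounded sets; the type V axioms directly give that $\mathcal O$ is a valuation ring, and a single compactness argument in an $|F|^+$-saturated $F' \succ F$ produces $a \in F'$ with $aU' \cap F = \mathcal O$. This bypasses Kowalsky--D\"urbaum entirely and is insensitive to the shape of $U$.
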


\begin{proof}
Let $F_0$ be an elementary substructure of $F$ of cardinality $|L|$.
Let $\mathcal O$ be the union of all $F_0$-definable bounded sets.
We show that $\mathcal O$ is a valuation ring of $F$.
If $A,B \subseteq F$ are bounded then $A+B$ and $AB$ are also bounded, so $\mathcal O$ is a subring of $F$.
Suppose that $a \notin \mathcal O$.
Then there is a $F_0$-definable open neighborhood $U$ of zero such that $a \notin U$.
As $(F \setminus U)^{-1}$ is bounded we have $a^{-1} \in \mathcal O$.
Thus $\mathcal O$ is a valuation subring of $F$.
Let $\nu$ be the valuation associated to $\mathcal O$.
We show that the $\nu$-topology agrees with $\mathcal T$ on $F$.
If $A \subseteq F$ is bounded and $U \subseteq F$ is bounded and open then $A + U$ is bounded.
Thus every bounded definable subset of $F$ is contained in an open bounded definable subset of $F$.
It follows that $\mathcal O$ is $\mathcal T$-open, so every $\nu$-open set is $\mathcal T$-open.
To show that every $\mathcal T$-open set is $\nu$-open it suffices to fix an $\nu$-open neighborhood $V$ of zero and find an $a \in F$ such that $a \mathcal O \subseteq V$.
Let $V$ be a definable open neighborhood of zero.
For every bounded $F_0$-definable $U \subseteq F$ there is an $a \in F$ such that $aU \subseteq V$.
Applying $|L|^+$-saturation there is an $a \in F$ such that $aU \subseteq V$ holds for every $F_0$-definable bounded set $U$, we have $a \mathcal O \subseteq V$ for this $a$.

We now show that $\mathcal O$ is externally definable.
Let $F'$ be an $|F|^+$-saturated elementary expansion of $F$.
Let $U \subseteq F$ be an $F_0$-definable bounded open neighborhood of zero and let $U'$ be the subset of $F'$ defined by the defining formula of $U$.
For every $F_0$-definable bounded set $V \subseteq F$ there is an $a \in F_0$ such that $V \subseteq aU$.
Furthermore $aU$ is disjoint from $F \setminus \mathcal O$ for every $a \in F_0$.
An application of $|F|^+$-saturation shows that that there is an $a \in F'$ such that $V \subseteq aU'$ holds for every $F_0$-definable bounded set $V$ and $aU'$ is disjoint from $F \setminus \mathcal O$.
Then $ aU' \cap F = \mathcal O$ holds for this $a$.
Thus $\mathcal O$ is externally definable.
\end{proof}

We now show that a dp-minimal field which admits a definable type V topology admits a definable henselian valuation.
We let $L_{ring}$ be the language of rings.
By Lemma~\ref{external} it suffices to show that a dp-minimal valued field admits an $L_{ring}$-definable henselian valuation.
Recall that if $(K,v)$ is an equicharacteristic valued field and $K$ is perfect then the residue field $Kv$ is perfect.
Recall that dp-minimal fields are perfect, see Lemma~\ref{perfect}.
We implicitly use these facts several times in this section to upgrade the assumption `separably closed' to `algebraically closed'.

Let $K$ be a field and $p$ be a prime.
We say that $K$ is \textbf{$p$-closed} if $K$ has no nontrivial Galois extensions of degree $p$.
The term ``$p$-closed'' has been given other meanings in the literature. 
We say that $K$ is \textbf{henselian} if $K$ admits some non-trivial henselian valuation.
A valuation on $K$ is \textbf{$p$-henselian} if it extends uniquely to every Galois extension of $K$ of $p$-power degree.
We say that $K$ is \textbf{$p$-henselian} if $K$ admits a non-trivial $p$-henselian valuation.

A non-separably closed henselian field $K$ admits a
\textbf{canonical
henselian valuation} $v_K$ (cf.\,\cite[p.\,106]{EP05}): 
The valuation rings of henselian valuations on $K$ with non-separably closed residue field are ordered by inclusion.
If $K$ admits no henselian valuation with separably closed residue field then $v_K$ is the finest henselian valuation on $K$.
If $K$ admits a henselian valuation with separably closed residue field then $v_K$
is the coarsest henselian valuation with separably closed residue field. 
In the second case, all henselian valuations with non-separably closed residue field are coarsenings of $v_K$.
In either case, $v_K$ is non-trivial.

Suppose there is a prime $p$ such that $K$ is $p$-henselian and not $p$-closed
(if $p=2$, assume further that $K$ admits a Galois extension of degree 4, i.e., that $K$ is not euclidean). 
Then $K$ admits a \textbf{canonical $p$-henselian valuation} $v_K^p$ on $K$ (cf.\,\cite[p.\,97]{Koe95}).
The valuation rings of $p$-henselian valuations with non $p$-closed residue fields
are ordered by inclusion.
If $K$ does not admit a henselian valuation with $p$-closed residue field then 
$v_K^p$ is the finest $p$-henselian valuation on $K$.
If $K$ admits a henselian valuation with $p$-closed residue field then $v_K^p$
is the coarsest henselian valuation with $p$-closed residue field. 
In this case, all henselian valuations whose residue fields are not $p$-closed are coarsenings of $v_K^p$.
In either case $v_K^p$ is non-trivial.

\begin{Prop} \label{Pdef}
Let $(K,v)$ be a dp-minimal valued field and let $v_K$ be the canonical henselian valuation on $K$.
One of the following holds:
\begin{enumerate}
\item $Kv_K$ is separably closed,
\item $Kv_K$ is  real closed,
\item $v_K$ is $L_{ring}$-definable.
\end{enumerate}
\end{Prop}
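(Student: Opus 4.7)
The plan is to leverage the main result of \cite{JK14a}: whenever the canonical $p$-henselian valuation $v_K^p$ is defined on a field $K$, it is definable by a parameter-free $L_{ring}$-formula. The strategy is therefore to show that if neither (1) nor (2) holds, then there is a prime $p$ for which $v_K^p$ is defined and coincides with $v_K$.

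By Proposition~\ref{prop2}, $v$ is henselian, so $K$ is a henselian field and $v_K$ is well-defined. If $K$ is itself separably closed, then $Kv_K$ is separably closed and (1) holds trivially; so assume $K$ is not separably closed. Suppose further that $Kv_K$ is neither separably closed nor real closed, aiming to establish (3).

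Since $Kv_K$ is not separably closed, I fix a prime $p$ such that $Kv_K$ admits a Galois extension of degree $p$. By Hensel's lemma this lifts to a Galois extension of $K$ of degree $p$, so $K$ is not $p$-closed. If $p>2$, the canonical $p$-henselian valuation $v_K^p$ is defined. If $p=2$, its definition further demands a Galois extension of $K$ of degree $4$; should this be lacking, a short Galois-theoretic argument on the henselian field $K$ forces $Kv_K$ to be real closed, contradicting the standing assumption. Hence $v_K^p$ is defined.

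It remains to match $v_K^p$ with $v_K$. The valuation $v_K$ is $p$-henselian with non-$p$-closed residue field, so by the characterization of $v_K^p$ (the finest $p$-henselian valuation with non-$p$-closed residue field, or the coarsest henselian valuation with $p$-closed residue field, according to which subcase applies), a case distinction paired with the analogous description of $v_K$ (the finest henselian valuation, resp.\ the coarsest henselian valuation with separably closed residue field) forces the two valuation rings to agree. The main obstacle is this bookkeeping, particularly in the subcase where $K$ admits a henselian valuation with $p$-closed but not separably closed residue field, where one must carefully use that all henselian valuations on the non-separably closed field $K$ are comparable. Once $v_K = v_K^p$, the definability result of \cite{JK14a} yields (3).
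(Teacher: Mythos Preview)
Your approach differs from the paper's and has two substantive gaps.

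First, the definability of $v_K^p$ via \cite[Theorem 3.1]{JK14} requires $\zeta_p \in K$ when $\mathrm{char}(K) \neq p$; you never arrange this. The paper handles it by passing to a finite Galois extension $L$ of $K$ containing $\zeta_p$ (and whose residue field admits the needed degree-$p$, and if $p=2$ also degree-$4$, Galois extensions), then working with $v_L^p$ rather than $v_K^p$. Since $L$ is interpretable in $K$, the restriction $u$ of $v_L^p$ to $K$ is $L_{ring}$-definable in $K$.

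Second, and more seriously, your matching step ``$v_K^p = v_K$ by case analysis on the two characterizations'' does not go through. From the characterizations you obtain only that $v_K^p$ \emph{refines} $v_K$ (since $v_K$ is $p$-henselian with non-$p$-closed residue field). To conclude equality you need $v_K^p$ to be henselian, but a $p$-henselian valuation need not be henselian. This is precisely where dp-minimality must enter: once the candidate valuation $u$ is $L_{ring}$-definable, $(K,u)$ is dp-minimal, so $u$ is henselian by Proposition~\ref{prop2}; then $u = v_K$ because $Kv_K$ is not separably closed, making $v_K$ the \emph{finest} henselian valuation on $K$. Your write-up never invokes Proposition~\ref{prop2} at this point, and without it the bookkeeping you describe cannot close the gap.

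The $p=2$ discussion is also thin: the claim that the absence of a degree-$4$ Galois extension of $K$ forces $Kv_K$ to be real closed is not immediate (a euclidean residue field need not be real closed). The paper sidesteps this entirely via \cite[Theorem 4.3.5]{EP05}, which furnishes a finite Galois extension $k$ of $Kv_K$ with the required Galois extensions; this is another reason the passage to the extension $L$ is essential to the argument.
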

\begin{proof}
Let $v_K$ denote the canonical henselian valuation on $K$ and 
assume that $Kv_K$ is neither separably closed nor real closed.
Thus $v_K$ is the finest henselian valuation on $K$.
There is a prime $p$ and a finite Galois extension $k$
of $Kv_K$ such that $k$ admits a Galois extension of degree $p$ (and, if $p=2$, also a Galois extension of degree $4$,
see \cite[Theorem 4.3.5]{EP05}.
There is also a finite Galois extension $L$ of $K$ such that $Lw=k$, where $w$ is the unique prolongation of $v$ to $L$.
If $\mathrm{char}(L)\neq p$ we may additionally assume that $L$ contains a primitive $p$th root
of unity $\zeta_p$: \cite[Theorem 4.3.5]{EP05} implies the residue field of $L(\zeta_p)$ with respect to the (unique) prolongation of $v_K$
still admits a Galois extension of degree $p$.

Note that as $k$ is not $p$-closed, neither is $L$ (\cite[Theorem 4.2.6]{EP05}). 
The canonical $p$-henselian valuation $v_L^p$ on $L$ is nontrivial and 
$L_{ring}$-definable (cf. \cite[Theorem 3.1]{JK14}). 
By the defining properties of $v_L^p$ and as $Lw$ admits a Galois extension of degree $p$, 
$w$ is a coarsening of $v_L^p$. 
As $L$ is interpretable in $K$, the restriction $u$ of $v_L^p$ to $K$ is also a non-trivial
definable valuation on $K$. Since $w$ is a coarsening of $v_L^p$, $u$ is a refinement of $v_L$. 
As $u$ is definable $(K,u)$ is dp-minimal, hence henselian.
 Proposition
\ref{prop2} implies that $(K,u)$ is henselian. 
As $v_K$ has no henselian refinements we conclude that $u=v_K$.
Thus $v_K$ is definable.
\end{proof}

\begin{Cor}\label{ringdef}
 Let $(K,v)$ be a dp-minimal valued field. Then either $K$ is algebraically closed or real closed or $K$
admits a non-trivial $L_{ring}$-definable henselian valuation.
\end{Cor}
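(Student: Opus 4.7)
The plan is to derive Corollary~\ref{ringdef} from Proposition~\ref{Pdef} together with the $L_{ring}$-definability of the canonical $p$-henselian valuation established in \cite[Theorem 3.1]{JK14}. Assume $K$ is neither algebraically closed nor real closed; I aim to produce a non-trivial $L_{ring}$-definable henselian valuation on $K$. By Proposition~\ref{prop2} the given valuation $v$ is henselian, and, assuming $v$ non-trivial, $K$ admits a non-trivial henselian valuation, so the canonical henselian valuation $v_K$ on $K$ is defined and non-trivial. Applying Proposition~\ref{Pdef}, if $v_K$ is itself $L_{ring}$-definable we are done.

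Otherwise $Kv_K$ is separably --- equivalently algebraically, by the perfectness remark in the excerpt combined with Lemma~\ref{perfect} --- closed, or real closed. Since $K$ is neither algebraically closed nor real closed, the Ax-Kochen/Ersov theorem forces the value group $v_K(K^\times)$ to be non-divisible, from which I extract a prime $p$ such that $K$ admits a Galois extension of degree $p$ (and of degree $4$ when $p=2$); in the real closed residue-field case, where $K$ lacks $\zeta_p$ for odd $p$, this may require passing to a finite Galois extension $L/K$ containing $\zeta_p$ and descending, exactly as in the proof of Proposition~\ref{Pdef}. For such $p$, the residue field $Kv_K$ remains $p$-closed (algebraically closed fields are $p$-closed for every prime, real closed fields for every odd prime), so $v_K$ is $p$-henselian with $p$-closed residue field. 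The canonical $p$-henselian valuation $v_K^p$ on $K$ is therefore defined and, by its defining property as the coarsest such, is a coarsening of $v_K$; being a coarsening of a henselian valuation it is itself henselian, and it is non-trivial because $K$ is not $p$-closed. By \cite[Theorem 3.1]{JK14} the valuation $v_K^p$ is $L_{ring}$-definable, giving the required valuation.

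The main obstacle will be Case $(2)$ of Proposition~\ref{Pdef} together with the awkwardness of the prime $p=2$: a real closed residue field is only $p$-closed for odd $p$, but for odd $p$ the field $K$ lacks $\zeta_p$, so the Kummer-theoretic translation from non-$p$-divisibility of $v_K(K^\times)$ to a degree-$p$ Galois extension of $K$ must be run over a finite Galois extension of $K$ and then descended. The degree-$4$ requirement when $p=2$ is a similar technicality. Both are handled by a careful choice of $p$ and auxiliary extension, mirroring the strategy of the proof of Proposition~\ref{Pdef}.
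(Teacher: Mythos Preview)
Your approach diverges from the paper's, which handles cases (1) and (2) of Proposition~\ref{Pdef} by directly citing \cite[Theorem 3.10]{JK14a} and \cite[Corollary 3.11]{JK14a}: these guarantee a non-trivial $L_{ring}$-definable henselian valuation on any non-separably-closed henselian field whose canonical residue field is separably closed, respectively real closed. No further work with $v_K^p$ is needed.

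Your proposed alternative has a genuine gap in the real-closed residue case. When $Kv_K$ is real closed, $K$ is in fact $p$-closed for \emph{every} odd prime $p$: since $\zeta_p\notin K$ (its residue would have to satisfy $1+\bar\zeta_p+\cdots+\bar\zeta_p^{\,p-1}=0$, impossible in a real closed field of characteristic $0$), no totally ramified degree-$p$ extension of $K$ can be Galois, and the residue extension of any odd-degree Galois extension is trivial. Equivalently, $G_K\cong G_{K(i)}\rtimes\mathbb{Z}/2\mathbb{Z}$ with the involution acting by inversion on the abelian group $G_{K(i)}$, and such a semidirect product has no quotient of odd prime order. Hence there is no odd $p$ for which $v_K^p$ exists, while for $p=2$ the residue field is not $2$-closed, so your key sentence ``the canonical $p$-henselian valuation $v_K^p$ on $K$ is therefore defined and \ldots\ non-trivial because $K$ is not $p$-closed'' cannot be made to hold. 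The descent you allude to does \emph{not} produce a Galois degree-$p$ extension of $K$; what the proof of Proposition~\ref{Pdef} actually does is restrict $v_L^p$ from $L$ to $K$ and then argue that this restriction is henselian --- a different valuation from $v_K^p$, obtained by a different mechanism. You could salvage the argument this way (taking $L=K(i)$, noting $v_L^p$ coarsens the prolongation of $v_K$, hence its restriction coarsens $v_K$ and is henselian), but that is not what you wrote.

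A secondary issue: invoking Ax--Kochen/Ersov to force non-divisibility of $v_K(K^\times)$ is only valid in equicharacteristic zero; in mixed or positive characteristic a henselian field with algebraically closed residue field and divisible value group need not be algebraically closed, so within the framework of this paper (which does not establish defectlessness) that step is unjustified.
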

\begin{proof}
Assume that $K$ is neither algebraically closed nor real closed.
It follows that $K$ is not separably closed.
Let $v_K$ denote the canonical henselian valuation on $K$.
 If $Kv_K$ is separably closed (respectively real closed), 
then $K$ admits a non-trivial definable henselian valuation by \cite[Theorem 3.10]{JK14a} (respectively \cite[Corollary 3.11]{JK14a}).
Otherwise, $v_K$ is definable by Proposition \ref{Pdef}. 
\end{proof}

By Lemma~\ref{external} and Corollary~\ref{ringdef} we have:

\begin{Prop}
Let $K$ be a dp-minimal field admitting a definable type V topology.
Then $K$ is either real closed, algebraically closed or admits a non-trivial definable henselian valuation.
\end{Prop}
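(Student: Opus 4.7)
The plan is to combine Lemma~\ref{external} with Corollary~\ref{ringdef} directly; the substantive work has already been done in proving those two results, so the present proposition is essentially a synthesis.

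First, I would reduce to the saturated case. Since dp-minimality, the existence of a non-trivial definable henselian valuation, and being real closed or algebraically closed are all elementary properties preserved under elementary extensions, I may replace $K$ by an $|L|^+$-saturated elementary extension. Once saturated, Lemma~\ref{external} applies to produce a valuation ring $\mathcal{O}\subseteq K$ which is externally definable and induces the given type V topology. Because $K$ is NIP and dp-minimal, Proposition~\ref{prop_shelah} ensures that the Shelah expansion $K^{Sh}$ is dp-minimal; the expansion $(K,\mathcal{O})$ is a reduct of $K^{Sh}$, so $(K,\mathcal{O})$ is itself dp-minimal.

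At this point $(K,\mathcal{O})$ is a dp-minimal valued field, so Corollary~\ref{ringdef} applies directly: either $K$ is algebraically closed, or $K$ is real closed, or $K$ admits a non-trivial $L_{ring}$-definable henselian valuation. Since the latter conclusions are purely statements about the pure field $K$ (the definable henselian valuation lives in $L_{ring}$, not in the language naming $\mathcal{O}$), they descend from the saturated extension back to the original $K$, completing the proof.

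The main conceptual obstacle would have been verifying that a definable type V topology canonically yields an externally definable valuation in a way compatible with dp-minimality, but this is exactly the content of Lemma~\ref{external}; and turning a dp-minimal valued field into one with an $L_{ring}$-definable henselian valuation (modulo the real/algebraically closed exceptions) is Corollary~\ref{ringdef}. Thus no new ingredient is needed beyond assembling the two pieces and handling the saturation reduction carefully.
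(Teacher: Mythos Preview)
Your proposal is correct and follows the same approach as the paper, which simply states that the proposition follows from Lemma~\ref{external} and Corollary~\ref{ringdef}. You supply the saturation reduction and the descent back to the original $K$ that the paper leaves implicit; both are routine and handled correctly.
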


\textit{For the remainder of this section $F$ is an ordered field with real closure $R$}.
To end this section, we give an explicit construction of an 
$L_\textrm{of}$-definable non-trivial 
valuation on a dp-minimal ordered field which is not real closed. 
Here $L_\textrm{of}$ denotes
the language of ordered fields, i.e., $L_\textrm{of}=L_\textrm{ring}\cup\{\leq\}$.
By Proposition \ref{prop2}, any such valuation is 
henselian.
The existence of such a valuation is already implied by the
results above, however, the proofs given above 
are non-constructive.
In fact, we prove a stronger result: an ordered field is either dense in its real closure or admits a definable valuation.
%
%

\begin{Lem} \label{corG}
Let $F$ be an ordered field with real closure $R$. Fix $\alpha \in R$, and suppose that for each $\epsilon \in F$
with $\epsilon > 0$, there exists $b \in F$ such that $|\alpha-b|<\epsilon$. Then, $\alpha$ is in the closure of $F$.
\end{Lem}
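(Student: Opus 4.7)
The plan is to reduce the topological closure condition to the hypothesised $F$-indexed approximation by first establishing that $F^{>0}$ is coinitial in $R^{>0}$, i.e., for every $\eta \in R$ with $\eta > 0$ there exists $\delta \in F$ with $0 < \delta \leq \eta$. Granted this, the lemma is immediate: given an arbitrary $\eta \in R^{>0}$, pick such a $\delta$, apply the hypothesis with $\epsilon = \delta$ to obtain $b \in F$ satisfying $|\alpha - b| < \delta \leq \eta$, and conclude that $\alpha$ lies in the closure of $F$ in $R$.

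To verify coinitiality, I will use that $R$ is algebraic over $F$. Given $\eta \in R^{>0}$, the inverse $\eta^{-1}$ is again in $R$ and so algebraic over $F$; write its minimal polynomial as $p(X) = X^n + c_{n-1}X^{n-1} + \cdots + c_0 \in F[X]$. The classical Cauchy-style root bound, $|r| \leq 1 + \sum_{i=0}^{n-1}|c_i|$ for every root $r$ of a monic polynomial, rests solely on the ordered-field triangle inequality and the monotonicity of powers on $[1,\infty)$, so it applies verbatim in $R$. Setting $M := 1 + \sum_{i=0}^{n-1}|c_i| \in F^{>0}$ we obtain $\eta^{-1} \leq M$ and hence $\eta \geq M^{-1}$ with $M^{-1} \in F^{>0}$, as required.

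I do not anticipate a genuine obstacle: the argument is essentially this single observation followed by a one-line application of the hypothesis. The only point requiring a moment's care is that the root bound transfers from $\mathbb{R}$ to a general ordered field, but this is routine since the usual proof uses only the ordered-field axioms.
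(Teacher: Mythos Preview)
Your proof is correct and follows the same strategy as the paper: reduce the closure statement to showing that $F^{>0}$ is coinitial in $R^{>0}$, and then deduce coinitiality from the algebraicity of $R$ over $F$ via a polynomial argument. The only difference is cosmetic --- the paper argues by contradiction (an $F$-infinitesimal cannot satisfy an irreducible polynomial over $F$ since the constant term dominates the remaining infinitesimal terms), whereas you give the direct dual via the Cauchy root bound applied to $\eta^{-1}$.
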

\begin{proof}
We need to show that for each $\epsilon \in R_{>0}$ there is some $\delta \in F_{>0}$ with $\delta<\epsilon$.
Assume not, so there is some $\epsilon  \in R_{>0}$ such that for all 
$\delta \in F_{>0}$ the inequality $\epsilon<\delta$ holds.
Then, if $f(X) = \sum_{i\leq n} a_i X^ i\in F[X]$ is irreducible we have 
$a_0\neq 0$ and $-\delta<\sum_{0<i\leq n} a_i \epsilon^ i<\delta$ holds for all 
$\delta \in F_{>0}$. Thus, $f(\epsilon)\neq 0$. This contradicts the fact that $R$ is algebraic
over $F$.
\end{proof}

We can now give the final results of this paper:
\begin{Prop}\label{prop1} Let $F$ be an ordered field with real closure $R$.
One of the following holds:
\begin{enumerate}
\item $F$ is dense in $R$;
\item $F$ admits an $L_\textrm{of}$-definable valuation.
\end{enumerate}
\end{Prop}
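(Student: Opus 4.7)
The plan is to use the gap from Lemma~\ref{corG} to produce an $L_\textrm{of}$-definable cut in $F$ invariant under a nontrivial additive translation, and then to extract from its stabilizer a convex valuation ring. Assume $F$ is not dense in $R$. By the contrapositive of Lemma~\ref{corG}, there exist $\alpha \in R \setminus F$ and $\epsilon \in F_{>0}$ such that $|\alpha - b| \geq \epsilon$ for every $b \in F$. Since $R$ is algebraic over $F$, $\alpha$ is a root of some $m(x) \in F[x]$; if $\alpha$ is the $j$-th real root of $m$ in its natural ordering, then Sturm's theorem makes the condition ``$m$ has at most $j-1$ real roots strictly less than $b$'' $L_\textrm{of}$-definable in $b$, and it cuts out exactly the set $L := \{b \in F : b < \alpha\}$.

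Let $H := \{d \in F : L+d = L\}$. This is $L_\textrm{of}$-definable because $L$ is, and downward closure of $L$ together with a straightforward case check shows that $H$ is a convex additive subgroup of $F$: for instance, if $0 \leq a \leq d$ and $d \in H$, then $l + a \leq l + d \in L$ and $l - a \leq l \in L$ force $L + a = L$. The gap yields $[-\epsilon, \epsilon] \subseteq H$: for $|d| < \epsilon$, translating $L$ by $d$ moves no $F$-element across $\alpha$, because no element of $F$ lies within $\epsilon$ of $\alpha$. On the other hand $H \neq F$: for any $l \in L$ and any $u \in F \setminus L$, the translate $L + (u - l)$ contains $u \notin L$, so $u - l \notin H$.

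Finally, set $\mathcal{O} := \{x \in F : xH \subseteq H\}$. Then $\mathcal{O}$ is $L_\textrm{of}$-definable, visibly a subring of $F$ containing $1$, and convex in the order: if $0 \leq y \leq x$ with $x \in \mathcal{O}$ and $h \in H_{\geq 0}$, then $0 \leq yh \leq xh \in H$ forces $yh \in H$ by convexity of $H$, and the case $h \leq 0$ is similar. Any convex subring of an ordered field is a valuation ring. Nontriviality, $\mathcal{O} \neq F$, follows because $\mathcal{O} = F$ would give $F \cdot h_0 \subseteq H$ for any nonzero $h_0 \in H$, forcing $H = F$ and contradicting the previous paragraph. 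The main technical point is the $L_\textrm{of}$-definability of the cut $L$, which reduces to Sturm's theorem for the minimal polynomial of $\alpha$; once $L$ is in hand, the remaining verifications are routine manipulations of convex subrings.
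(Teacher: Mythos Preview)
Your proof is correct and follows essentially the same route as the paper's: define the cut below $\alpha$, take its additive stabilizer, and then take the multiplicative stabilizer of that to obtain a nontrivial convex (hence valuation) subring. The only notable difference is that you explicitly justify the $L_{\mathrm{of}}$-definability of the cut via Sturm's theorem, whereas the paper simply asserts it; your use of the full group stabilizer $H$ in place of the paper's semigroup $A = H_{\geq 0}$ is a cosmetic variation.
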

\begin{proof}
We suppose that $F$ is not dense in $R$.
Let $\alpha$ be an element of $R$ which is not in the closure of $F$.
Without loss of generality, we may assume $\alpha>0$.
Then
$$D:= \{a \in F \,\mid\, a < \alpha\}$$ 
is a definable subset of $F$. Hence,
$$A:=\{y \in F_{\geq 0}\,\mid\,y+D \subseteq D\}$$
is also a definable subset of $F$.\\
\emph{Claim:} $\{0\} \subsetneq A \subsetneq F_{\geq 0}$ is a proper convex semigroup of $F$.\\
\emph{Proof of claim:} It is easy to see that $A$ is a convex and closed under addition. Furthermore, $A$ is closed under addition
since for $y_1,y_2 \in A$ and $d \in D$ we have 
$$(y_1+y_2)+d = y_1 + \underbrace{(y_2+d)}_{\in D} \in D.$$
Finally, assume $A=\{0\}$. Then, for all $\epsilon \in F_{>0}$ there is some $d \in D$ with $d+\epsilon > \alpha$,
hence $\alpha - d < \epsilon$. Lemma \ref{corG} now implies that $\alpha$ is in the closure of $F$, a contradiction.
This proves the claim.\\
Let $\mathcal{O} = \{ a \in F : aA \subseteq A \}$ be the multiplicative stabilizer of $A$.
It is easy to see that $\mathcal{O}$ is a convex subring of $F$.
For any $b \in F_{\geq 0}$ with $b > a$ for all $a \in A$ we have $b \notin 
\mathcal{O}$. Thus $\mathcal{O}$ is non-trivial.
\end{proof}

\begin{Cor}
Let $F$ be a dp-minimal ordered field which is not real closed. Then, the definable valuation constructed in the proof
of Proposition \ref{prop1} is non-trivial and henselian.
\end{Cor}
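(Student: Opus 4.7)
The plan is to reduce the corollary to the two main results already established: Proposition~\ref{prop1} (the construction of the definable valuation) and Proposition~\ref{prop2} (henselianity of dp-minimal valued fields). The only substantive content beyond these is to verify that the hypothesis of Proposition~\ref{prop1} is satisfied, namely that a dp-minimal ordered field which is not real closed fails to be dense in its real closure.

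First I would argue that $F$ is not dense in its real closure $R$. Since dp-minimal ordered fields are topologically closed in their real closures (the ordered-field case of Proposition~\ref{propC}, which is also recorded as Proposition~3.6 of \cite{G14}), and since by hypothesis $F \neq R$, there must be some $\alpha \in R \setminus F$ such that $\alpha$ does not lie in the closure of $F$ with respect to the order topology. This is exactly the assumption needed to launch the construction in the proof of Proposition~\ref{prop1}.

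Second, I would invoke Proposition~\ref{prop1} directly: its proof produces a proper convex subring $\mathcal{O}\subsetneq F$ defined by an $L_\textrm{of}$-formula, with non-triviality verified inside the proof (the element witnessing $F_{\geq 0} \not\subseteq A$ lies outside $\mathcal{O}$). So we obtain a non-trivial $L_\textrm{of}$-definable valuation $v$ on $F$.

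Finally, I would observe that since $\mathcal{O}$ is $L_\textrm{of}$-definable in the dp-minimal structure $F$, the expansion $(F,\mathcal{O})$ in the language $L_\textrm{ring}\cup\{\leq,\mathcal{O}\}$ is still dp-minimal (adding a predicate for a definable set preserves dp-minimality). Consequently $(F,v)$ is a dp-minimal valued field, and Proposition~\ref{prop2} then forces $v$ to be henselian. The only potentially delicate point is the very first step, since one must be careful that the cited closure result for dp-minimal ordered fields really gives topological (not merely algebraic) closure in $R$; however, this is precisely what Goodrick's argument yields, and nothing further needs to be done.
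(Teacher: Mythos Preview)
Your proposal is correct and follows essentially the same route as the paper's proof: use closedness of a dp-minimal ordered field in its real closure (cited as \cite[Proposition~3.6]{G14}) to see that $F$ is not dense in $R$, apply Proposition~\ref{prop1} to obtain the non-trivial definable valuation, and then invoke Proposition~\ref{prop2} for henselianity. Your added remark that passing to the definitional expansion $(F,\mathcal{O})$ preserves dp-minimality is a helpful clarification that the paper leaves implicit.
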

\begin{proof} Assume that $F$ is a dp-minimal ordered field. By \cite[Proposition 3.6]{G14},
$F$ is closed in its real closure $R$. Thus, if $F$ is not real closed, $F$ cannot be dense in $R$
and thus admits a definable non-trivial valuation ring $\mathcal{O}$ by Proposition \ref{prop1}.
Now, Proposition \ref{prop2} implies that $\mathcal{O}$ is henselian.
\end{proof}

\bibliographystyle{alpha}
\bibliography{franzi}
\end{document}